\newcommand{\RR}{\mathbb{R}}
\newcommand{\NN}{\mathbb{N}}
\newcommand{\eps}{\varepsilon}
\newcommand{\dx}{ \mathrm{dx}}
\newcommand{\dy}{ \mathrm{dy}}
\newtheorem{theorem}{Theorem}[section]
\newtheorem{proposition}[theorem]{Proposition}
\newtheorem{lemma}[theorem]{Lemma}
\theoremstyle{definition}
\newtheorem{remark}[theorem]{Remark}
\numberwithin{equation}{section}
\begin{document}
	 
	\title[Hardy inequality and heat equation on $\RR^{N-k}\times (0,\infty)^k$]{The Hardy inequality and large time behaviour of the heat equation on $\RR^{N-k}\times (0,\infty)^k$}
	\author{Cristian Cazacu}
	\address{Cristian Cazacu: $^1$Faculty of Mathematics and Computer Science \\
		University of Bucharest\\
		010014 Bucharest, Romania\\
		\&
		$^2$Gheorghe Mihoc-Caius Iacob Institute of Mathematical
		Statistics and Applied Mathematics of the Romanian Academy\\
		050711 Bucharest, Romania
	}
	\email{cristian.cazacu@fmi.unibuc.ro}
	\author{Liviu I. Ignat}
	\address{Liviu I. Ignat: $^1$"Simion Stoilow" Institute of Mathematics of the Romanian Academy\\
		Calea Grivitei Street, no. 21\\
		010702 Bucharest, Romania\\
		\&
		$^2$ICUB-Research Institute of the University of Bucharest\\
		University of Bucharest\\
		Șoseaua Panduri, no. 90, Sector 5\\ 050663, Bucharest, Romania  
	}
	\email{liviu.ignat@gmail.com}
	\author{Drago\c s Manea}
	\address{Drago\c s Manea: "Simion Stoilow" Institute of Mathematics of the Romanian Academy\\
		Calea Grivitei Street, no. 21\\
		010702 Bucharest, Romania
	}
	\email{dmanea28@gmail.com}
	\thanks{C. C. and L. I. were partially supported by the Romanian Ministry of Research, Innovation and Digitization,
		CNCS - UEFISCDI, project number PN-III-P1-1.1-TE-2021-1539, within PNCDI III. D. M. was partially supported by CNCS-UEFISCDI Romania, Grant no. 0794/2020
“Spectral Methods in Hyperbolic Geometry” PN-III-P4-ID-PCE-2020-0794 }
	\date{\today}
	
	\begin{abstract}
		\noindent In this paper we study the large time asymptotic behaviour of the heat equation with Hardy inverse-square potential on corner spaces $\RR^{N-k}\times (0,\infty)^k$, $k\geq 0$. We first show a new improved  Hardy-Poincar\'{e} inequality for the quantum harmonic oscillator with Hardy potential. In view of that,  we construct the appropriate functional setting in order to pose the Cauchy problem. Then 
		we obtain optimal polynomial large  time decay rates and subsequently the first term in the asymptotic expansion of the solutions in $L^2(\RR^{N-k}\times (0,\infty)^k)$. Particularly, we  extend and improve the results obtained by V\'{a}zquez and Zuazua (J. Funct. Anal. 2000), which correspond to the case $k=0$, to any $k\geq 0$. We emphasize that the higher the value of $k$ the better time decay rates are. We employ a different and simplified approach than V\'{a}zquez and Zuazua, managing to remove the usage of spherical harmonics decomposition in our analysis. 
	\end{abstract}
	\subjclass[2020]{35A23, 35B40, 35K67, 46E35}
	\keywords{Asymptotic behaviour, Hardy inequality, Inverse-square potential, Heat equation, Self-similarity variables}
	\maketitle
	\section{Introduction}
	In this paper we analyze sharp time asymptotic properties of the Cauchy problem with Dirichlet boundary conditions:
	\begin{equation}
		\label{heatHardyHalf}
		\left\{\begin{array}{ll}
		\partial_t u(t,x)=\Delta u(t,x)+\frac{\lambda}{|x|^2}u(t,x), & t> 0, x\in \RR^{N, k}_+, \\
			u(t,x)=0, & t> 0,\  x\in \partial \RR^{N, k}_+, \\
			u(0,x)=u_0(x), & x\in \RR^{N, k}_+,
		\end{array}\right.
	\end{equation}
	where $N\geq 2$ is the space dimension,  $k\in \{0, \ldots, N\}$ is a fixed integer and $\RR_{+}^{N, k}$ denotes the space $\RR^{N-k}\times (0,\infty)^k\subset \RR^N$ with  its boundary denoted by $\partial\RR_{+}^{N, k}$.  The initial data $u_0$ is assumed to belong to $L^2(\RR_{+}^{N, k})$, $|x|$ is the Euclidean norm of $x$ in $\RR^N$ and $\lambda/|x|^2$ is an inverse-square Hardy potential with a  real parameter $\lambda$ which will be made precise later. To be more specific, for $N\in \NN$ and $k\in \{0,1,\ldots,N\}$, we denote
	\begin{multline*}
		\RR_{+}^{N, k}:=\Big\{x=(x^\prime, x_{N-k+1}, \ldots, x_N) \in \RR^N \ |\ x^\prime=(x_1, \ldots, x_{N-k})\in \RR^{N-k} \\
		\mathrm{ and } \        x_{N-k+j}>0, \mathrm{for\ all \ } j\in \NN \ \mathrm{ with \ } 1\leq j\leq k \Big\}
	\end{multline*}
	and 
	\begin{multline*}
		\partial\RR_{+}^{N, k}=\Big\{x=(x^\prime, x_{N-k+1}, \ldots, x_N) \ |\ x^\prime=(x_1, \ldots, x_{N-k})\in \RR^{N-k} \\
		\mathrm{ and } \        x_{N-k+j}=0, \mathrm{for\ all \ } j\in \NN \ \mathrm{ with \ } 1\leq j\leq k  \Big\}.
	\end{multline*} 
	It can be easily seen that $\RR_{+}^{N, 0}$ coincides with the space $\RR^N$ and $\RR_{+}^{N, 1}$ is known as the upper half space, usually denoted by $\RR_{+}^N$ in the literature.
	
	The real parameter $\lambda$  in the  singular potential $\lambda/|x|^2$ is assumed to belong to the range $\lambda\in (-\infty, \lambda_{N, k}]$ where 
	\begin{equation}\label{constant}
		\lambda_{N, k}:= \left (\frac{N-2}{2}+k\right)^2
	\end{equation}
	is   the optimal constant in the Hardy inequality on $\RR_{+}^{N, k}$. This inequality (see e.g. \cite{Su-Yang})  reads:
	
	\begin{equation}\label{Hardy_k}
		\int_{\RR_{+}^{N, k}} |\nabla u|^2 \dx   \geq \lambda_{N, k}\int_{\RR_{+}^{N, k}}\frac{u^2}{|x|^2} \dx, \quad \forall u \in C_c^\infty(\RR_{+}^{N, k}).
	\end{equation}

	Later in  paper, we will construct the appropriate functional setting for the problem to be well-posed  and study the asymptotic behaviour of the solution in the $L^2$ norm. We distinguish two different situations:
	\begin{enumerate}[label=\alph*)]
	\item the subcritical case $\lambda< \lambda_{N, k}$;
	\item the critical case $\lambda= \lambda_{N, k}$.
	\end{enumerate}

	In order to outline the main results and  tools used in the proofs, let us consider the Hamiltonian operator $A_\lambda$, initially defined on $C_c^\infty(\RR_{+}^{N, k})$, which has the form:
	$$A_\lambda u:=-\Delta u -\frac{\lambda}{|x|^2}u.
	$$
	Then, the problem \eqref{heatHardyHalf} can be written as:
	\begin{equation}
		\label{heatHamitonian}
		\left\{\begin{array}{ll}
			\partial_t u(t,x)+A_\lambda u(t, x)=0, & t> 0, x\in \RR^{N, k}_+, \\
			u(t,x)=0, & t> 0,\  x\in \partial \RR^{N, k}_+, \\
			u(0,x)=u_0(x), & x\in \RR^{N, k}_+.
		\end{array}\right.
	\end{equation}
Due to the Hardy inequality \eqref{Hardy_k},	a theorem by Reed and Simon \cite[cf. Theorem VIII.15]{ReedSimon} allows us to extend $A_\lambda$ for any $\lambda\leq \lambda_{N,k}$ to a self-adjoint unbounded and nonnegative operator on $L^2(\RR_{+}^{N,k})$.  
	So, following The Hille-Yosida theory (see, e.g. \cite{cazenave}), we can consider the contractions semigroup
	$$e^{-tA_\lambda}: L^2(\RR_{+}^{N, k})\rightarrow L^2(\RR_{+}^{N, k}), \quad  u_0 \mapsto u(t):=e^{-t A_\lambda}u_0, \quad t\geq 0,$$
	satisfying $u\in C([0, \infty); L^2(\RR_{+}^{N, k}))$. 
The contraction property implies
     $\|u(t)\|_{L^2(\RR_+^{N, k})}\leq \|u_0\|_{L^2(\RR_+^{N, k})}$ and moreover, for any positive time $t$ we have $\|e^{-t A_\lambda}\|_{L^2(\RR_+^{N, k})\rightarrow L^2(\RR_+^{N, k})}= 1$.   
     To check this it is enough to consider a sequence of initial data $(\varphi_n)_{n\geq 1}$ in $D(A_\lambda)$ such that $\|\varphi_n\|_{L^2(\RR_+^{N, k})}=1$ and 
     $\|A_\lambda \varphi_n\|_{L^2(\RR_+^{N, k})}\rightarrow 0$, i.e. $\varphi_n(x)=(1/n)^{N/2} \varphi(x/n)$ with $\varphi$ a smooth normalized compactly supported function.

	One way we can force the semigroup to decay is to consider the initial data in a subspace of $L^2(\RR_{+}^{N,k})$.  It is by now classical to take $u_0$ in the weighted space 
	$$L^2(\RR_{+}^{N, k}; K):=\left\{u \in L^2(\RR_{+}^{N, k}) \  \Big|\ \int_{\RR_{+}^{N, k}} |u(x)|^2 K(x) \dx <\infty \right\}, \quad K(x):=e^{\frac{|x|^2}{4}}.$$
	
	In the following, where there is no risk of confusion, we write $L^2$ and $L^2(K)$ instead of $L^2(\RR_{+}^{N, k})$ and $L^2(\RR_{+}^{N, k}; K)$, respectively. Then we initiate our analysis by using the so-called "self-similiary variables" method, which will allow us to cast problem \eqref{heatHardyHalf} into another one, induced by an operator with compact inverse, which will help to gain polynomial decay rates in time for the original problem \eqref{heatHamitonian}. 
    
    The self-similarity variables transformation we use takes the following form:
    \begin{equation}
        \label{selfSimilarityFull}
        v(s,y)=K^{\frac 1 2}(y) e^{\frac{Ns}{4}} u\left(e^s-1,e^{\frac{s}{2}}y\right).
    \end{equation}
    With this notation, $v$ formally satisfies the following Cauchy problem:
    
    \begin{equation}
		\label{hardyPotentialSymFull}
		\left\{\begin{array}{ll}
			\partial_s v(s,y)+ A_\lambda v(s, y) +\frac{|y|^2}{16}v(s,y)=0, & s> 0, \ y\in \RR_{+}^{N, k}, \\
			v(s,y)=0, & s>0, y\in \partial \RR^{N,k}_+,\\
			v(0,y)=v_0(y)=K^{\frac 1 2 }(y)u_0(y), & y\in \RR_{+}^{N, k}.
		\end{array}\right.
	\end{equation}
    The evolution problem \eqref{hardyPotentialSymFull} is a contraction semigroup in $L^2$ generated by the operator 
     \begin{equation}
	\label{LLambdaFormal}
	L_\lambda v= A_\lambda v + \frac{|y|^2}{16}v=-\Delta v-\frac{\lambda}{|y|^2}v+ \frac{|y|^2}{16}v,
	\end{equation}
 which is defined through the associated $L^2$ quadratic form $l_\lambda[\cdot]$,
    \begin{equation}
        \label{intro.defllambdaQuadratic}
        l_\lambda[v] :=\int_{\RR_+^{N, k}} \left(|\nabla v|^2  - \frac{\lambda}{|y|^2}|v|^2 + \frac{|y|^2}{16}|v|^2\right)\dy, \quad v\in C_c^\infty(\RR^{N,k}_+).
    \end{equation}
    The domain  of $l_\lambda[\cdot]$  is given by:
	\begin{equation}
	    \label{intro.defH}
		H_\lambda=D(l_\lambda):=\overline{C_c^\infty(\RR_{+}^{N, k})}^{\|\cdot\|_{l_\lambda}},
	\end{equation}
	where the norm $\|\cdot\|_{l_\lambda}$ on which the closure is taken is given by $\|v\|_{l_\lambda}:=\sqrt{l_\lambda[v]}$. This is indeed a norm since it controls the $L^2$ norm (see Theorem \ref{main_th1} below). Moreover, $H_{\lambda}$ is continuously embedded in $L^2$, hence \cite[Theorem VIII.15]{ReedSimon} allows us to define an unbounded operator $L_\lambda$ on $L^2(\RR^{N,k}_+)$, with domain $D(L_\lambda)$.	A detailed analysis of the spaces $H_\lambda$ and $D(L_\lambda)$ is done in Section  \ref{functionalSpectral}.

Since the operator $L_\lambda$ is self-adjoint and non-negative, the well-posedness of the problem \eqref{equationVHat} is ensured by the Hille-Yosida theory (see, e.g. \cite[Theorem 3.2.1]{cazenave})  which leads to the following: for any $v_0\in L^2(\RR_+^{N,k})$ problem \eqref{hardyPotentialSymFull} has a unique solution:
	\begin{equation} \label{well-posedness}
	v\in C([0,\infty),L^2(\RR_+^{N,k}))\cap C((0,\infty),D(L_\lambda))\cap C^1((0,\infty),L^2(\RR_+^{N,k})).
\end{equation}

	  Regarding the problem \eqref{hardyPotentialSymFull}, we should remark two things. First, the assumption that $u_0$ is in $L^2(K)$, implies that the initial data $v_0$ is in $L^2$. Next, the extra term $\frac{|y|^2}{16} v$, which blows up for large $|y|$, forces the solutions to concentrate in compact regions, so we obtain compactness when embedding the form domain $H_\lambda$ into $L^2$. 
    This shows that the spectrum of the operator $L_\lambda$ is discrete and accumulates at infinity. Applying analytical tools  which are specific to parabolic equations, we deduce that the first eigenvalue of $L_\lambda$ is positive and provides sharp exponential decay for the solutions $v(s, \cdot)$ of \eqref{hardyPotentialSymFull}. In consequence, returning to the original problem, we are able to obtain sharp polynomial large time decay for the solution $u(t, \cdot)$ of \eqref{heatHamitonian}. This will be rigorously detailed later.    
	
		In order to state our main results we need to introduce the number  which quantifies the criticality of the parameter $\lambda$:
\begin{equation}\label{key_number}
m_\lambda:=\sqrt{\lambda_{N, k}-\lambda}
\end{equation}
	and the functions
	\begin{equation}\label{minimizers}
		\alpha_{k,\lambda}(x)=e^{-\frac{|x|^2}{8}}\, |x|^{m_\lambda-\frac{N-2}{2}}\, \frac{x_{N-k+1}x_{N-k+2}\cdots x_{N}}{|x|^k},
		\end{equation}
	which, as we will see later, play a fundamental role in our analysis.  
	
	The first main result of our paper relies on the  following improved Hardy-Poincar\'{e} inequality in which the sharp constant turns out to be the first eigenvalue of the operator $L_\lambda$.  
	\begin{theorem}\label{main_th1}
		For any $u \in C_c^\infty(\RR_{+}^{N, k})$ it holds 
		\begin{equation}\label{improved_Hardy}
			\int_{\RR_{+}^{N, k}}\left(|\nabla u|^2 -\lambda\frac{|u|^2}{|x|^2} + \frac{|x|^2}{16}|u|^2\right) \dx \geq \frac{1+m_\lambda}{2}\int_{\RR^N} |u|^2 \dx.
		\end{equation}
		Moreover, the constant $\frac{1+m_\lambda}{2}$ in \eqref{improved_Hardy} is sharp and it is achieved by the functions $\alpha_{k, \lambda}$ in \eqref{minimizers}. 
 is (see Section \ref{section:improvedHardy} for details).
	\end{theorem}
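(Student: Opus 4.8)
The plan is to use the \emph{ground state substitution} (factorisation) method with the explicit candidate $\phi:=\alpha_{k,\lambda}$ from \eqref{minimizers}. The first step is to verify that $\phi$ is a classical, positive solution on $\RR_+^{N,k}$ of the Euler--Lagrange equation
\[
-\Delta\phi-\frac{\lambda}{|x|^2}\phi+\frac{|x|^2}{16}\phi=\frac{1+m_\lambda}{2}\,\phi .
\]
I would get this by a direct differentiation: the polynomial $x_{N-k+1}x_{N-k+2}\cdots x_N$ is harmonic and homogeneous of degree $k$, so $\tfrac{x_{N-k+1}\cdots x_N}{|x|^k}$ restricts to a spherical harmonic with Laplace--Beltrami eigenvalue $k(k+N-2)$; writing $\phi=e^{-|x|^2/8}\,|x|^{a}\,x_{N-k+1}\cdots x_N$ with $a=m_\lambda-\tfrac{N-2}{2}-k$ and splitting $\Delta$ into radial and angular parts, the terms carrying $|x|^{-2}$ cancel precisely because $m_\lambda^2=\lambda_{N,k}-\lambda$ and $\tfrac{N-2}{2}+k=\sqrt{\lambda_{N,k}}$, while the leftover constant collapses to $\tfrac{1+m_\lambda}{2}$ exactly by the choice of $a$. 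This computation is the conceptual core of the statement: it is what singles out the functions $\alpha_{k,\lambda}$.

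Then, for $u\in C_c^\infty(\RR_+^{N,k})$, I would write $u=\phi w$ (legitimate since $\phi>0$ there) and use the pointwise identity
\[
|\nabla u|^2-\frac{\lambda}{|x|^2}u^2+\frac{|x|^2}{16}u^2=\phi^2|\nabla w|^2+\operatorname{div}\!\big(\phi\,w^2\,\nabla\phi\big)+\Big(-\Delta\phi-\tfrac{\lambda}{|x|^2}\phi+\tfrac{|x|^2}{16}\phi\Big)\phi\,w^2,
\]
which comes from $\nabla u=w\,\nabla\phi+\phi\,\nabla w$ together with $w^2|\nabla\phi|^2+2w\phi\,\nabla\phi\cdot\nabla w=\operatorname{div}(\phi w^2\nabla\phi)-\phi w^2\Delta\phi$. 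Inserting the Euler--Lagrange equation turns the last term into $\tfrac{1+m_\lambda}{2}u^2$; integrating over $\RR_+^{N,k}$ and dropping the divergence term (which integrates to zero) yields
\[
\int_{\RR_+^{N,k}}\Big(|\nabla u|^2-\frac{\lambda}{|x|^2}u^2+\frac{|x|^2}{16}u^2\Big)\dx=\int_{\RR_+^{N,k}}\phi^2|\nabla w|^2\,\dx+\frac{1+m_\lambda}{2}\int_{\RR_+^{N,k}}u^2\,\dx,
\]
which is \eqref{improved_Hardy}, with equality exactly when $\nabla w\equiv0$, i.e. $u$ is a multiple of $\alpha_{k,\lambda}$.

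The step I expect to be delicate is the vanishing of the divergence term, i.e. the absence of boundary contributions. For $k\ge1$ this is automatic: every $u\in C_c^\infty(\RR_+^{N,k})$ is supported at a positive distance both from $\partial\RR_+^{N,k}$ and from the origin (which lies on that boundary), so $\phi\,w^2\,\nabla\phi$ has compact support inside the open set. For $k=0$ the support of $u$ may reach the origin, where $\phi$ and $\nabla\phi$ may be singular; I would excise $\{|x|<\varepsilon\}$, apply the divergence theorem on the complement, and check that the flux across $\{|x|=\varepsilon\}$ is $O(\varepsilon^{N-2})\to0$ (for $N=2$ there is no singularity, since $\lambda_{N,0}=0$ makes the radial exponent $m_\lambda-\tfrac{N-2}{2}$ non-negative), the term $\phi^2|\nabla w|^2$ being integrable near the origin because $N\ge2$; equivalently, one invokes the density of $C_c^\infty(\RR^N\setminus\{0\})$ in $(C_c^\infty(\RR^N),\|\cdot\|_{l_\lambda})$. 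For the sharpness part: when $\lambda<\lambda_{N,k}$ one has $m_\lambda>0$, and a short exponent count (with the Gaussian factor taking care of integrability at infinity) shows $\alpha_{k,\lambda}\in H_\lambda$, so choosing $w\equiv1$ makes the displayed identity an equality and the constant $\tfrac{1+m_\lambda}{2}$ is attained; when $\lambda=\lambda_{N,k}$ one has $m_\lambda=0$ and $\alpha_{k,\lambda}\notin H^1_{\mathrm{loc}}$ near the origin, so $\tfrac12$ is still optimal but reached only in the limit, which I would establish by testing \eqref{improved_Hardy} against suitable truncations of $\alpha_{k,\lambda}$ and letting the cut-offs degenerate. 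The principal obstacle is thus not the algebra of the factorisation but the clean verification of the Euler--Lagrange identity for $\alpha_{k,\lambda}$ and the limiting arguments at the origin in the cases $k=0$ and $\lambda=\lambda_{N,k}$.
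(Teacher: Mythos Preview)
Your proposal is correct and follows essentially the same route as the paper: the ground-state factorisation $u=\alpha_{k,\lambda}\,w$, together with the verification that $L_\lambda\alpha_{k,\lambda}=\tfrac{1+m_\lambda}{2}\alpha_{k,\lambda}$ via the spherical-harmonic decomposition of $\alpha_{k,\lambda}$, yields exactly the identity \eqref{lNormMinusL2norm}, and sharpness is obtained by testing against radial cut-offs of $\alpha_{k,\lambda}$. The only notable difference is that the paper treats sharpness uniformly for all $\lambda\le\lambda_{N,k}$ through one explicit logarithmic cut-off construction (Lemma~\ref{desing.aprox}) rather than splitting into subcritical and critical cases, and in fact shows (Theorem~\ref{firstEigenfunction}) that $\alpha_{k,\lambda}\in H_\lambda$ even when $\lambda=\lambda_{N,k}$, so the constant is genuinely attained in $H_\lambda$ rather than only in the limit.
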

	
	\begin{remark}
		In fact, the minimizers $\alpha_{k, \lambda}$ in \eqref{improved_Hardy} do not belong to $C_c^\infty(\RR_{+}^{N, k})$ but will see that that inequality \eqref{improved_Hardy} extends to functions $u$ from the larger space $H_\lambda$ and the equality is achieved in $H_\lambda$ for $u=\alpha_{k, \lambda}$   (see Section \ref{functionalSpectral} for details).
	\end{remark}
      One of the main purposes of this paper is to find the following number:
	\begin{equation*}\label{decay_rate}
		\gamma_\lambda:=\sup \left\{ \gamma \geq 0 \ \Big |\  \exists C_\gamma >0 \ \mathrm{  s. t. }\   \|u(t)\|_{L^2}\leq C_\gamma (1+t)^{-\gamma}\|u_0\|_{L^2(K)}, \forall t\geq 0, u_0\in L^2(K)\right\}
	\end{equation*}
	which will provide us the optimal polynomial decay rate of the solutions of \eqref{heatHardyHalf}.
		In addition, we are interested in the asymptotic profile of the solution as time goes to infinity, namely in the behaviour of the function 
	$t\to t^{\gamma_\lambda}u(t).$
	The main result of the paper is the following:
	\begin{theorem}\label{main_th}
		For any initial data $u_0\in L^2(K)$, the solution $u$ of problem \eqref{heatHardyHalf} satisfies:
		\begin{equation}
			\label{solutionDecayRNk}
			\left\|u(t)\right\|_{L^2(\RR_{+}^{N,k})} \leq (1+t)^{-\frac{1+m_\lambda}{2}}\|u_0\|_{L^2(\RR_{+}^{N,k}; K)},\hspace{0.3cm}\forall t\geq 0.
		\end{equation}
		Moreover,	this polynomial decay rate is sharp:
		\begin{equation}
			\label{asymptoticProfileRNK}
			\lim_{t\to \infty} t^{\frac{1+m_\lambda}{2}} \left\|u(t)-\beta\, t^{-(1+m_\lambda)}e^{-\frac{|x|^2}{4t}} e^{\frac{|x|^2}{8}} \alpha_{k, \lambda}\right\|_{L^2(\RR_{+}^{N, k})}=0,
		\end{equation}
		where
		$$\beta=\left(\left\|\alpha_{k,\lambda}\right\|_{L^2(\RR_{+}^{N, k})}\right)^{-1}\int_{\RR_{+}^{N, k}} u_0(x)e^{\frac{|x|^2}{8}} \alpha_{k, \lambda}(x) \dx.$$
	\end{theorem}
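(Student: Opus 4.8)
The plan is to reduce everything to the spectral analysis of the operator $L_\lambda$ in self-similarity variables, exactly as the introduction suggests. First I would record that under the change of variables \eqref{selfSimilarityFull} one has the isometry relation between weighted and unweighted norms, namely
\begin{equation*}
\|u(t)\|_{L^2(\RR_+^{N,k})} = e^{-\frac{Ns}{4}} \cdot e^{\frac{Ns}{4}} (\text{Jacobian factors}) \ \Longrightarrow\ \|v(s)\|_{L^2(\RR_+^{N,k})} = \|u(t)\|_{L^2(\RR_+^{N,k})},\qquad t=e^s-1,
\end{equation*}
and more importantly that $\|u_0\|_{L^2(K)} = \|v_0\|_{L^2(\RR_+^{N,k})}$; this is the reason $L^2(K)$ is the natural data space. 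Then the decay estimate \eqref{solutionDecayRNk} is a direct consequence of Theorem \ref{main_th1}: since $l_\lambda[v]\ge \frac{1+m_\lambda}{2}\|v\|_{L^2}^2$ for all $v\in H_\lambda$, the smallest eigenvalue $\mu_1$ of $L_\lambda$ satisfies $\mu_1\ge\frac{1+m_\lambda}{2}$, and a standard energy argument ($\frac{d}{ds}\|v\|_{L^2}^2 = -2 l_\lambda[v] \le -(1+m_\lambda)\|v\|_{L^2}^2$) gives $\|v(s)\|_{L^2}\le e^{-\frac{1+m_\lambda}{2}s}\|v_0\|_{L^2}$. Undoing the substitution via $e^s = 1+t$ turns the exponential decay in $s$ into the polynomial rate $(1+t)^{-\frac{1+m_\lambda}{2}}$, which is \eqref{solutionDecayRNk}.

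Next I would establish that $\mu_1 = \frac{1+m_\lambda}{2}$ is exactly the first eigenvalue, with one-dimensional eigenspace spanned by $\alpha_{k,\lambda}$. The inequality $\mu_1\ge\frac{1+m_\lambda}{2}$ is above; the reverse and the identification of the eigenfunction come from the sharpness clause of Theorem \ref{main_th1}, namely that $\alpha_{k,\lambda}\in H_\lambda$ realizes equality, so it is a minimizer of the Rayleigh quotient and hence an eigenfunction with eigenvalue $\frac{1+m_\lambda}{2}$. Simplicity of $\mu_1$ should follow from a Perron–Frobenius / positivity argument: $\alpha_{k,\lambda}$ has a fixed sign on the (connected) domain $\RR_+^{N,k}$ away from the boundary, and the semigroup $e^{-sL_\lambda}$ is positivity improving (it is dominated below and above by heat-type kernels), so the ground state is unique up to scalar multiples. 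This is a place where the corner geometry requires a small amount of care, but the structure of $\alpha_{k,\lambda}$ — the product $x_{N-k+1}\cdots x_N$ vanishing simple-linearly on each boundary face — is precisely the right boundary behaviour for a Dirichlet ground state.

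For the asymptotic profile \eqref{asymptoticProfileRNK}, the idea is the classical spectral decomposition: write $v_0 = c_1\phi_1 + w_0$ with $\phi_1 = \alpha_{k,\lambda}/\|\alpha_{k,\lambda}\|_{L^2}$ the normalized ground state, $c_1 = \langle v_0,\phi_1\rangle$, and $w_0\perp\phi_1$. Then $v(s) = c_1 e^{-\mu_1 s}\phi_1 + e^{-sL_\lambda}w_0$, and since $L_\lambda$ restricted to $\phi_1^\perp$ has spectrum bounded below by the second eigenvalue $\mu_2 > \mu_1$, one gets $\|e^{-sL_\lambda}w_0\|_{L^2} \le e^{-\mu_2 s}\|w_0\|_{L^2}$. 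Therefore
\begin{equation*}
e^{\mu_1 s}\big\| v(s) - c_1 e^{-\mu_1 s}\phi_1 \big\|_{L^2} \le e^{-(\mu_2-\mu_1)s}\|w_0\|_{L^2} \longrightarrow 0 .
\end{equation*}
It then remains to translate this back through \eqref{selfSimilarityFull}: the term $c_1 e^{-\mu_1 s}\phi_1(y)$ becomes, in the original variables, a constant times $t^{-(1+m_\lambda)} e^{-|x|^2/(4t)} e^{|x|^2/8}\alpha_{k,\lambda}(x)$ — one checks that $v(s,y) = c_1 e^{-\mu_1 s}\phi_1(y)$ corresponds under \eqref{selfSimilarityFull} to exactly the self-similar profile displayed in \eqref{asymptoticProfileRNK}, and that the prefactor works out to $\beta = \|\alpha_{k,\lambda}\|_{L^2}^{-1}\langle v_0,\alpha_{k,\lambda}\rangle = \|\alpha_{k,\lambda}\|_{L^2}^{-1}\int u_0 e^{|x|^2/8}\alpha_{k,\lambda}$, using $v_0 = K^{1/2}u_0$. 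Finally, multiplying by $t^{\frac{1+m_\lambda}{2}}$ and using $e^s = 1+t$ converts the $e^{-(\mu_2-\mu_1)s}$ gain into a positive power of $t$ in the denominator, giving \eqref{asymptoticProfileRNK}.

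The main obstacle I anticipate is not the energy/spectral bookkeeping but two more delicate points: (i) verifying rigorously that $v$ defined by \eqref{selfSimilarityFull} is genuinely the semigroup solution of \eqref{hardyPotentialSymFull} with $v_0\in L^2$ — i.e. that the formal computation is justified in the weak/variational sense and that $v_0\in L^2$ iff $u_0\in L^2(K)$ — and (ii) proving the spectral gap $\mu_2 > \mu_1$ together with simplicity of $\mu_1$, which is where compactness of $H_\lambda\hookrightarrow L^2$ (to get discreteness of the spectrum, as noted in the introduction) and the positivity-improving property of the semigroup enter. Both of these are presumably handled in Section \ref{functionalSpectral}, so in the proof I would invoke them, but they are the genuine content behind the otherwise routine self-similar-variables argument.
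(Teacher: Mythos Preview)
Your overall strategy matches the paper's: pass to self-similarity variables, use the spectral gap of $L_\lambda$ (discreteness from compact embedding, first eigenvalue $\frac{1+m_\lambda}{2}$ with eigenfunction $\alpha_{k,\lambda}$, established in Section~\ref{functionalSpectral}), get exponential decay and profile for $v$, then undo the change of variables. The paper carries this out via Parseval with the eigenbasis rather than an energy differential inequality, but that is cosmetic.

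There is, however, one concrete error in your bookkeeping. You assert $\|v(s)\|_{L^2}=\|u(t)\|_{L^2}$; this is false. What holds is $\|w(s)\|_{L^2}=\|u(t)\|_{L^2}$ for the intermediate function $w=K^{-1/2}v$, while for $v$ itself one computes
\[
\|v(s)\|_{L^2}^2=\int_{\RR_+^{N,k}}|u(t,x)|^2\,e^{\frac{|x|^2}{4(t+1)}}\,\dx\ \geq\ \|u(t)\|_{L^2}^2,
\]
with equality only at $s=t=0$ (where it gives $\|v_0\|_{L^2}=\|u_0\|_{L^2(K)}$, which you state correctly). The argument still goes through precisely because this is an inequality in the right direction: the paper first obtains a weighted estimate and then drops the weight $e^{|x|^2/(4(t+1))}\geq 1$. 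The same comment applies to the profile limit; you should be aware that an extra step (dominated convergence after rescaling) is needed to replace $t+1$ by $t$ in the profile, which the paper does explicitly.

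On simplicity of $\mu_1$: the paper does \emph{not} use a Perron--Frobenius / positivity-improving argument. Instead it exploits the ground-state identity \eqref{lNormMinusL2norm},
\[
l_\lambda[u]-\tfrac{1+m_\lambda}{2}\|u\|_{L^2}^2=\int_{\RR_+^{N,k}}\alpha_{k,\lambda}^2\,\Big|\nabla\Big(\tfrac{u}{\alpha_{k,\lambda}}\Big)\Big|^2\,\dx,
\]
so that any eigenfunction at level $\frac{1+m_\lambda}{2}$ forces $u/\alpha_{k,\lambda}$ to be constant. This is more elementary than establishing that $e^{-sL_\lambda}$ is positivity improving (which would be delicate with the singular potential), and since you planned to invoke Section~\ref{functionalSpectral} anyway, you should cite this argument rather than the Perron--Frobenius route.
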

	\begin{remark}
	    Formula \eqref{asymptoticProfileRNK} expresses the fact that $u$ admits the asymptotic expansion in $L^2$:
	    $$u(t, x)= U(t, x)+ o (t^{-\frac{1+m_\lambda}{2}}), \textrm{ as } t\rightarrow \infty.$$
	    where the asymptotic profile $U(t, x):=\beta\, t^{-(1+m_\lambda)}e^{-\frac{|x|^2}{4t}} e^{\frac{|x|^2}{8}} \alpha_{k, \lambda}(x)$ satisfies that $t^{\frac{1+m_\lambda}{2}}\|U(t, \cdot)\|_{L^2(\RR^{N,k}_+)}$ is constant in time.  
	\end{remark}
	
	\begin{remark}
		In particular, if $\lambda$ is critical, i.e. $\lambda=\lambda_{N, k}$, then $m_\lambda=0$, so the optimal decay rate in \eqref{solutionDecayRNk} is:
		$(t+1)^{-\frac{1}{2}}$ 
		and the normalizing constant is
		$$\beta=\sqrt{\frac{2^{k-1}}{\omega_{N,k}}}\int_{\RR^N_k} u_0(x)e^{\frac{|x|^2}{8}} \alpha_{k, \lambda}(x) \dx,$$
		where $\omega_{N,k}=\int_{S^{N-1}}\sigma_{N-k+1}^2\cdots \sigma_N^2 d\sigma$.
		More details about this calculations can be seen in Section \ref{section:asymptotic}.
	\end{remark}
	
	\begin{remark}
	   In the case of the half-space $\RR^N_+$, i.e. $k=1$, Theorem \ref{asymptoticProfileRNK} reads:
		\begin{equation}
			\label{solutionDecayHalf}
			 \left\|u(t)\right\|_{L^2(\RR^N_+)} \leq (1+t)^{-\frac{1+m_\lambda}{2}} \|u_0\|_{L^2(\RR^N_+,K)},\hspace{0.3cm}\forall t\geq 0
		\end{equation}
		and
		\begin{equation}
			\label{asymptoticProfileHalf}
			\lim_{t\to 0} t^{\frac{1+m_\lambda}{2}} \left\|u(t,x)-\beta\, t^{-(1+m_\lambda)}e^{-\frac{|x|^2}{4t}}\, |x|^{m_\lambda-\frac{N}{2}}\, x_N \right\|_{L^2\left(\RR^N_+\right)(x)}=0,
		\end{equation}
		where $m_\lambda=\sqrt{\frac{N^2}{4}-\lambda}$ and
		$$\beta=\frac{\sqrt{N}}{2^{m_\lambda}\sqrt{\Gamma(m_\lambda+1)}}\int_{\RR^N_+} u_0(x)\,|x|^{m_\lambda-\frac{N}{2}}\, x_N \dx,$$
			where $\Gamma$ denotes the Gamma function.  
		In particular, for critical $\lambda=\lambda_{N,1}=\frac{N^2}{4}$ we have $m_\lambda=0$ and 
		$$\beta=\sqrt{N}\int_{\RR^N_+} u_0(x)\,|x|^{-\frac{N}{2}}\, x_N \dx.$$
		\end{remark}
	\begin{remark}
		Inequality \eqref{solutionDecayHalf} implies that, if we work on the half-space $\RR^N_+$ with $\lambda=\frac{(N-2)^2}{4}$, which is the critical Hardy constant on $\RR^N$, we obtain the following decay estimate:
		$$t^{\frac{1+\sqrt{N-1}}{2}} \left\|u(t)\right\|_{L^2(\RR^N_+)} \leq \|u_0\|_{L^2(\RR^N_+,K)},\hspace{0.3cm}\forall t\geq 0,$$
		so the decay is improved from $\gamma_\lambda=\frac{1}{2}$ (as proved in \cite{VazquezZuazua}) to $\gamma_\lambda=\frac{1+\sqrt{N-1}}{2}$ when we switch from $\RR^N$ to $\RR^N_+$, while keeping the same parameter $\lambda$.
		More generally, when $\lambda$ is kept constant, the decay improves as we work on spaces $\RR_+^{N,k}$ with increasing $k$.
	\end{remark}

	The well-posedness of the heat equation with singular inverse-square potential was first addressed by Baras and Goldstein \cite{BarasGoldstein}, who showed that the Cauchy problem in $\RR^N$ is well-posed for $\lambda$ less or equal to the critical Hardy exponent $\lambda_{N,0}$, but ill-posed for greater values of $\lambda$. Later on, many paper studied well-posedness and asymptotic results for several generalisations of \eqref{heatHardyHalf}. For instance, Ferreira and Mesquita \cite{FerreiraMesquita} considered the heat equation with multipolar inverse-square potentials in $\RR^N$, proved sufficient conditions for well-posedness and analysed the asymptotic behaviour of the solutions via Fourier transform techniques. Abdellaoui, Peral and Primo \cite{AbdellaouiPeralPrimo} managed to prove well-posedness for any $\lambda>0$, provided that  an absorption term of the form $|\nabla u|^p$, for suitable $p$, is added to the equation \eqref{heatHardyHalf} Another direction of generalisation was replacing the heat operator in \eqref{heatHardyHalf} with the p-Laplacian: Garc\' ia Azorero and Peral Alonso \cite{GarciaAzoreroPeralAlonso} proved a well-posedness result in this case and Qian and Shen \cite{QianShen} proved the existence of a global attractor for the solutions. The heat equation with a perturbed version of the inverse-square potential in $\RR^N$ was studied by Ishige and Mukai \cite{IshigeMukai}, who obtained asymptotic profiles for initial data in $L^2(K)$, using spectral methods.
 
	The methods we use in our paper are based on the so-called self-similarity transform   (abv. SST),  an improved Hardy-type inequality and spectral analysis in $L^2(\RR_{+}^{N, k})$.  The SST method has been widely used in significant works in the literature since the '80s. For instance, Gidas and Spruck \cite{GidasSpruck} consider the self-similarity scaling to study the behaviour of elliptic equations near isolated singularities of the coefficient functions. Later on, Giga and Kohn \cite{GigaKohn} used the same technique to analyse the blow-up of solutions of the non-linear heat equation on a cylinder. Some years later, Escobedo and Kavian \cite{EscobedoKavian} considered SST and the weighted spaces $L^2(K)$ and $H^1(K)$ to study existence and uniqueness results for the semilinear heat equation. They proved the compactness of the embedding $ H^1(K) \subset L^2(K)$ and used spectral properties of the linear operator involved in the expression of the equation in similarity variables. These techniques were then applied by Escobedo and Zuazua \cite{EscobedoZuazua1991} in the analysis of the large-time asymptotic behaviour for the non-linear convection diffusion equation.
	
	The Hardy inequality is a cornerstone in the study of all sorts of differential equations, with a history of more than a century. A completely non-exhaustive survey of the extensions of this inequality would include the notorious results of Brezis-Marcus \cite{BM1997} and Brezis-Vazquez \cite{BV1997}, together with the more recent contributions of Ghoussoub-Moradifam \cite{GM2011} and 
 Lam, Lu and Zhang \cite{LLZ2020}.
 
  The contribution of Hardy-type inequalities in the study of asymptotic behaviour was the subject of various influential papers, starting with the  pioneering work of V\'azquez and Zuazua \cite{VazquezZuazua}, which deals with the heat equation with Hardy potential on $\RR^N$. They analysed the large-time decay and asymptotic profile of this equation, using, besides Hardy-type inequalities, all the machinery of SST and spectral results in the weighted space $L^2(K)$. Afterwards, those techniques were employed in order to obtain decay estimates for the heat equations in twisted tubes (Krejčiřík, Zuazua \cite{DavidZuazuaTwistedDomains}) and curved wedges (Krejčiřík \cite{DavidCurvedWedges}), but also for the heat semigroup with local magnetic field in the plane (Krejčiřík \cite{Krejcirik}).  Later on, the paper  \cite{CazacuDavid} by Cazacu and Krejčiřík is concerned with the  heat semigroup generated by the magnetic Schr\" odinger operator  perturbed by the critical Hardy potential.

It is also worth mentioning the work \cite{Gkikas}, where Gkikas obtained small-time kernel estimates and a Harnack inequality for the heat equation with inverse-squared-distance potential in exterior domains, via Hardy-Poincare inequalities.

	The main novel aspects brought in by our paper, compared to the references above, can be resumed as follows: 
 
 \begin{enumerate}[label=\alph*)]
 \item We generalize the asymptotic results for the heat equation with Hardy potential, obtained by V\'{a}zquez and Zuazua \cite{VazquezZuazua} in the case of the whole space $\RR^N$, to the corner space $\RR_{+}^{N, k}$, providing an improved decay;
 \item  Our analysis is more straightforward than its counterpart in \cite{VazquezZuazua}, as we totally get rid of the spherical harmonics machinery, especially in the proof of Theorem \ref{main_th1}, where we only have to properly design the positive eigenfunctions $\alpha_{k,\lambda}$.
 \end{enumerate}
	
The paper is organized as follows. In Section \ref{s1} we introduce the self-similarity transform (SST) and we write the problem \eqref{heatHardyHalf} in those variables. Section \ref{section:improvedHardy} consists of the proof of Theorem \ref{main_th1} i.e. the improved Hardy inequality in $\RR^{N,k}_+$. In Section \ref{functionalSpectral}, we provide the rigorous framework in which we pose the problem in order to prove the asymptotic estimates. Section \ref{section:asymptotic} contains the proof of the main result, namely Theorem \ref{main_th}.

	\section{The equation in similarity variables}\label{s1}
	In the spirit of \cite[Section 9]{VazquezZuazua}, we introduce the similarity variables $(s, y)\in (0, \infty)\times \RR_{+}^{N, k}$ through the following change of variables:
	$$(s, y):=\left(\ln(t+1), \frac{x}{\sqrt{t+1}}\right), \quad (t, x)=\left(e^{s}-1, e^\frac{s}{2}y\right),$$
	$$w(s,y):=e^\frac{Ns}{4}u\left(e^s-1,e^\frac{s}{2}y\right).$$
	An important feature of this transformation is that it keeps the $L^2$-norm invariant, i.e. 
	$$\|w(s)\|_{L^2}=\|u(t)\|_{L^2}.$$
	With these notations we have 
	$$\partial_s w(s,y)=e^\frac{Ns}{4}\left[\frac{N}{4}\cdot  u\left(e^s-1,e^\frac{s}{2}y\right) + e^\frac{s}{2}\frac{y}{2} \nabla u\left(e^s-1,e^\frac{s}{2}y\right) + e^s \partial_t u\left(e^s-1,e^\frac{s}{2}y\right)\right],$$
	$$\nabla_y w(s,y) = e^\frac{Ns}{4}\cdot e^\frac{s}{2} \nabla u\left(e^s-1,e^\frac{s}{2}y\right),$$
	$$\Delta_y w(s,y) = e^\frac{Ns}{4}\cdot e^{s} \Delta u\left(e^s-1,e^\frac{s}{2}y\right).$$
	
	Thus, problem \eqref{heatHamitonian} translates into:
	\begin{equation}
		\label{hardyPotentialSym}
		\left\{\begin{array}{ll}
			\partial_s w(s,y)+ A_\lambda w(s, y)-\frac{N}{4} w(s,y)  -\frac{y}{2} \nabla w(s,y)=0, & s> 0, \ y\in \RR_{+}^{N, k} \\
			w(s,y)=0, & s>0, y\in \partial \RR^{N,k}_+\\
			w(0,y)=u_0(y), & y\in \RR_{+}^{N, k}.
		\end{array}\right.
	\end{equation}
	Notice that the $s$-evolution generator of \eqref{hardyPotentialSym} gathering the non-zero order terms is an operator which can be written as:
	$$\Delta_y w+ \frac{y}{2}\cdot \nabla w=\frac{1}{K}\mathrm{div}(K \nabla w),$$
	where we recall that $K(y)=e^\frac{|y|^2}{4}$.
	This invites us to study the well-posedness of \eqref{hardyPotentialSym} in the weighted space $L^2(K)$. 
	For this reason,  we restrict  the initial data  $u_0$ to the weighted space $L^2(K)$. 
	However, since we prefer to work in a non-weighted space $L^2(\RR_+^{N,k})$, we 
	employ the transformation $v(s,y):=K^\frac{1}{2}(y)w(s,y)$ and study the equation satisfied by $v$, that is 
	\begin{equation}
		\label{hardyPotentialVhat}
		\left\{\begin{array}{ll}
			\partial_s v(s,y)+ A_\lambda v(s,y) + \frac{|y|^2}{16} v(s,y)=0, & s> 0,\  y\in \RR_{+}^{N, k}, \\
			v(s,y)=0, & s>0, y\in \partial \RR^{N,k}_+,\\
			v(0,y)=v_0(y)=K^\frac{1}{2}(y)w(0,y)=K^\frac{1}{2}(y)u_0(y), & y\in \RR_{+}^{N, k},
		\end{array}\right.
	\end{equation}
	with the initial data $v_0$ belongings to the space $L^2(\RR_+^{N, k})$.
	
	The Cauchy problem  \eqref{hardyPotentialVhat} is associated to the harmonic oscillator  operator with Hardy potential given by \eqref{LLambdaFormal}. 
	 Roughly speaking, we will prove that this operator has compact inverse. In particular, we are able to determine its first (positive) eigenvalue and the corresponding eigenfunctions.

    \section{Improved Hardy-Poincar\'{e} type innequality}
	\label{section:improvedHardy}
	This section is dedicated to the proof of Theorem \ref{main_th1}. The proof is very direct and short once we construct a positive eigenfunction in $\RR_+^{N, k}$ for the operator $L_\lambda$. This eigenfunction was defined in \eqref{minimizers}:  
	$$	\alpha_{k,\lambda}(x)=e^{-\frac{|x|^2}{8}}\,|x|^{m_\lambda-\frac{N-2}{2}}\, \frac{x_{N-k+1}x_{N-k+2}\cdots x_{N}}{|x|^k}\in L^2(\RR^{N,k}_+).$$ 
	First, notice the important detail that $\alpha_{k,\lambda}(x)>0$ in $\RR_{+}^{N, k}$. This allows to consider the well-defined function
	$$\phi(x):=\frac{u(x)}{\alpha_{k,\lambda}(x)},$$
	which belongs to $C_c^\infty(\RR_{+}^{N, k})$, provided that $u$ is smooth and compactly supported. Thus,
	$$\int_{\RR^{N,k}_+} |\nabla u|^2 \dx=\int_{\RR^{N,k}_+} \alpha_{k, \lambda}^2 |\nabla \phi|^2 \dx+ \int_{\RR^{N,k}_+} \phi^2 |\nabla \alpha_{k, \lambda}|^2 \dx+ 2 \int_{\RR^{N,k}_+} \alpha_{k, \lambda} \nabla \alpha_{k, \lambda} \phi\cdot \nabla \phi \dx.$$ 
	Due to the compact support of $\phi$, we can integrate by parts the mixt term above:
	\begin{align*}
		2 \int_{\RR^{N,k}_+} \alpha_{k, \lambda}
		\phi \nabla \alpha_{k, \lambda}\cdot \nabla \phi \dx &=  \int_{\RR^{N,k}_+} \alpha_{k, \lambda} \nabla \alpha_{k, \lambda} \cdot \nabla (\phi^2) \dx\\
		&=- \int_{\RR^{N,k}_+} \phi^2  \mathrm{div}(\alpha_{k, \lambda} \nabla\alpha_{k, \lambda}) \dx\\
		&=-\int_{\RR^{N,k}_+} \phi^2 |\nabla \alpha_{k, \lambda}|^2 \dx-\int_{\RR^{N,k}_+} \phi^2  \alpha_{k, \lambda}  \Delta \alpha_{k, \lambda}\dx.
	\end{align*}
	Therefore,
	\begin{align}\label{ident}
		\int_{\RR^{N,k}_+} |\nabla u|^2\dx &=\int_{\RR^{N,k}_+} \alpha_{k, \lambda}^2 |\nabla \phi|^2 \dx-\int_{\RR^{N,k}_+} \phi^2  \alpha_{k, \lambda}  \Delta \alpha_{k, \lambda}\dx\\ \nonumber
		&= \int_{\RR^{N,k}_+} \alpha_{k, \lambda}^2 \left|\nabla \left(\frac{u}{\alpha_{k, \lambda}}\right)\right|^2 \dx-\int_{\RR^{N,k}_+}  \frac{\Delta \alpha_{k, \lambda}}{\alpha_{k, \lambda}} |u|^2    \dx.
	\end{align}
	In addition, explicit computations show that  $\alpha_{k, \lambda}$ satisfies in classical sense the equation $L_\lambda \alpha_{k, \lambda}=\frac{1+m_\lambda}{2}\alpha_{k, \lambda}$, i.e.:
	\begin{equation}\label{miracle_eqn}
		\frac{-\Delta \alpha_{k, \lambda}(x)}{\alpha_{k, \lambda}(x)}=\frac{\lambda}{|x|^2}-\frac{|x|^2}{16}+\frac{1+m_\lambda}{2}, \quad \forall x\in \RR_{+}^{N,k}.
	\end{equation}
	We combine \eqref{ident} and \eqref{miracle_eqn} to obtain:
	\begin{equation}
	\label{lNormMinusL2norm}
	\begin{aligned}
		\int_{\RR_+^{N,k}}\left(|\nabla u|^2 -\lambda\frac{|u|^2}{|x|^2} + \frac{|x|^2}{16}|u|^2\right) \dx &- \frac{1+m_\lambda}{2}\int_{\RR^{N,k}_+} |u|^2 \dx \\
		&=\int_{\RR^{N,k}_+} \alpha_{k, \lambda}^2\left|\nabla\left(\frac{u}{\alpha_{k, \lambda}}\right)\right|^2 \dx \geq 0,
	\end{aligned}
	\end{equation}
		which proves \eqref{improved_Hardy}.
		
	In order to prove that the constant is sharp, we construct a sequence of functions $(\Lambda_\eps)_\eps\subset C_c^\infty(\RR_+^{N,k})$ such that:
 $$\lim_{\epsilon \to 0}\frac{l_\lambda [\Lambda_\eps]}{\|\Lambda_\eps\|^2_{L^2}}=\frac{1+m_\lambda}{2}.$$
It is enough to show that 
\begin{equation}
\label{conditionSharp.limitOfLlambda}
\lim_{\epsilon \to 0}\left(l_\lambda [\Lambda_\eps]-\frac{1+m_\lambda}{2}{\|\Lambda_\eps\|^2_{L^2}}\right)=0
\end{equation}
and 
\begin{equation}
\label{conditionSharp.L2normBoundedBelow}
\lim_{\epsilon \to 0}\|\Lambda_\eps\|_{L^2}>0.
\end{equation}
To construct such sequence we need the following Lemma:
\begin{lemma}\label{desing.aprox}
There exists a sequence of functions $(\psi_\eps)_{\eps>0}\in C_c^\infty(0, \infty)$  which satisfies:
	\begin{align}
		&\label{psi2}
		 {\rm supp}(\psi_\eps)\subseteq\left[\eps^2-\eps^4,\eps^{-2}+\eps^4\right];\\
		 &\label{psi3}
	0\leq 	\psi_\eps \leq 1,\, \psi_\eps\equiv 1\text{ on }\left[\eps+\eps^4,\eps^{-1}-\eps^4\right];\\
		 &\label{psi1}
		\text{for every }\zeta\geq 0\text{, }\lim_{\eps\to 0}\int_0^\infty e^{-\frac{r^2}{4}} r^{1+\zeta} |\psi'_\eps(r)|^2dr =0.
	\end{align}
	\end{lemma}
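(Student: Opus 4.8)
The plan is to build $\psi_\eps$ by gluing together two ``logarithmic cutoff'' pieces: one handling the singularity at $r=0$ and one handling the decay as $r\to\infty$, then mollifying to make the result smooth. The key observation is that condition \eqref{psi1} involves the weight $e^{-r^2/4}r^{1+\zeta}$, which is bounded on $(0,\infty)$ uniformly in $\zeta\ge 0$ on any set bounded away from $0$ and $\infty$; the only genuine difficulty is near $r=0$, where $r^{1+\zeta}\to 0$ helps us, and near $r=\infty$, where the Gaussian factor $e^{-r^2/4}$ decays superpolynomially and again helps. So after establishing the right scaling of the transition regions, \eqref{psi1} will essentially be for free.

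Concretely, first I would define a piecewise-linear (in $\ln r$) function $\widetilde\psi_\eps$ as follows: set $\widetilde\psi_\eps(r)=0$ for $r\le \eps^2$, let it increase linearly in $\ln r$ from $0$ to $1$ as $r$ runs over $[\eps^2,\eps]$ (so $\widetilde\psi_\eps(r)=\ln(r/\eps^2)/\ln(1/\eps)$ there), set $\widetilde\psi_\eps\equiv 1$ on $[\eps,\eps^{-1}]$, let it decrease linearly in $\ln r$ from $1$ to $0$ as $r$ runs over $[\eps^{-1},\eps^{-2}]$, and set $\widetilde\psi_\eps=0$ for $r\ge\eps^{-2}$. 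On the left transition region one computes $|\widetilde\psi_\eps'(r)|=\frac{1}{r\ln(1/\eps)}$, so
\begin{equation*}
\int_{\eps^2}^{\eps} e^{-r^2/4}r^{1+\zeta}|\widetilde\psi_\eps'(r)|^2\,dr \le \frac{1}{(\ln(1/\eps))^2}\int_{\eps^2}^{\eps} r^{\zeta-1}\,dr = \frac{1}{(\ln(1/\eps))^2}\cdot\frac{\eps^{\zeta}-\eps^{2\zeta}}{\zeta}\le \frac{C}{(\ln(1/\eps))^2}\xrightarrow[\eps\to 0]{}0,
\end{equation*}
and this bound is uniform for $\zeta$ in compact subsets of $(0,\infty)$; the case $\zeta=0$ is handled directly since then $\int_{\eps^2}^\eps r^{-1}dr=\ln(1/\eps)$, giving a bound $1/\ln(1/\eps)\to 0$. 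On the right transition region $|\widetilde\psi_\eps'(r)|=\frac{1}{r\ln(1/\eps)}$ as well, and the factor $e^{-r^2/4}$ with $r\ge\eps^{-1}$ makes the integral $\int_{\eps^{-1}}^{\eps^{-2}}e^{-r^2/4}r^{\zeta-1}\,dr$ decay faster than any power of $\eps$, so that contribution also vanishes.

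Second, I would mollify: choose a standard mollifier and convolve $\widetilde\psi_\eps$ at scale $\eps^4$ (more precisely, one should mollify the $\ln r$ variable or simply convolve in $r$ at scale $\eps^4$, which is much smaller than the width $\eps^2$ of the innermost transition onset) to obtain $\psi_\eps\in C_c^\infty(0,\infty)$. The support spreads by at most $\eps^4$ on each side, giving $\mathrm{supp}(\psi_\eps)\subseteq[\eps^2-\eps^4,\eps^{-2}+\eps^4]$, i.e.\ \eqref{psi2}; the plateau shrinks by at most $\eps^4$ on each side, giving $\psi_\eps\equiv 1$ on $[\eps+\eps^4,\eps^{-1}-\eps^4]$, i.e.\ \eqref{psi3}; and $0\le\psi_\eps\le 1$ is preserved by convolution with a nonnegative mollifier of unit mass. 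For \eqref{psi1} one uses that $\psi_\eps'=\widetilde\psi_\eps'\ast\rho_{\eps^4}$ where $\rho_{\eps^4}$ is the mollifier, so by Jensen/Cauchy--Schwarz $|\psi_\eps'(r)|^2\le (|\widetilde\psi_\eps'|^2\ast\rho_{\eps^4})(r)$, and since the weight $e^{-r^2/4}r^{1+\zeta}$ is (up to a constant depending only on $\zeta$ and varying by a bounded factor over an $\eps^4$-neighborhood) comparable on the support, one gets $\int e^{-r^2/4}r^{1+\zeta}|\psi_\eps'|^2\le C\int e^{-r^2/4}r^{1+\zeta}|\widetilde\psi_\eps'|^2$, which tends to $0$ by the previous step.

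The main obstacle is purely bookkeeping: making sure the mollification scale $\eps^4$ is small enough relative to every transition width ($\eps^2$, $\eps$, $\eps^{-1}$, $\eps^{-2}$) so that the support and plateau inclusions \eqref{psi2}--\eqref{psi3} hold \emph{exactly} as stated, and that the weight does not oscillate too much across a mollification window. Once the scales are fixed as above this is routine, and no essential analytic difficulty remains — the weight $r^{1+\zeta}e^{-r^2/4}$ is our friend at both ends.
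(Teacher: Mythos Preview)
Your proposal is correct and follows essentially the same approach as the paper: a logarithmic cutoff on $[\eps^2,\eps]$ glued to a plateau and an outer cutoff, then mollified at scale $\eps^4$. The only (inessential) differences are that the paper uses a \emph{linear} rather than logarithmic descent on $[\eps^{-1},\eps^{-2}]$, and that the paper controls the mollified derivative by a direct pointwise bound $|\psi_\eps'(r)|\le \frac{2}{r\,|\log\eps|}$ (using $r-s\ge r/2$ on the left window) instead of your Jensen-plus-weight-comparability argument; both routes lead to the same estimate and the same conclusion.
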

	
The sequence defined by  $\Lambda_\eps(x):=\alpha_{k,\lambda}(x)\,  \psi_\eps(|x|)$ satisfies \eqref{conditionSharp.limitOfLlambda}-\eqref{conditionSharp.L2normBoundedBelow}. Indeed,  $(\Lambda_\eps)_{\eps>0}$ converges to $\alpha_{k,\lambda}$ in $L^2(\RR^{N,k}_+)$ and, in view of \eqref{lNormMinusL2norm}, we get that:
 $$0\leq l_\lambda [\Lambda_\eps]-\frac{1+m_\lambda}{2}{\|\Lambda_\eps\|^2_{L^2}}=
 \int_{\RR_+^{N,k}}\alpha_{k,\lambda}^2|\nabla [\psi_\eps(|x|)]|^2 \leq |S^{N-1}| \int_0^\infty e^{-\frac{r^2}{4}}r^{1+2m_\lambda}|\psi_\eps'(r)|^2 dr,
 $$
 which tends to zero, by \eqref{conditionSharp.limitOfLlambda}. Furthermore, $\Lambda_\eps$ converges to $\alpha_{k,\lambda}$ in $L^2(\RR^{N,k}_+)$ as $\eps$ goes to zero.
 The proof of Theorem \ref{main_th1} is now finished.\hfill \qedsymbol{}
 
 \begin{proof}[Proof of Lemma \ref{desing.aprox}]In what follows we assume $\eps>0$ to be small enough. When $\zeta>0$, a simpler construction may be done. The main difficulty is to construct a sequence for $\zeta=0$, which corresponds to the critical value of $\lambda$. We provide an example which works for both cases.   
 The construction of $(\psi_\eps)_{\eps>0}$ goes as follows (the idea is inspired from \cite[Proposition 3.1]{CazacuFlynnLam}):
	First we consider the sequence   $\eta_\eps:\RR\to [0,\infty)$,
	$$\eta_\eps(r)=\left\{\begin{array}{cl}
		\frac{\log r-2\log\eps}{-\log\eps}, & r\in \left[\eps^2,\eps\right],\\[5pt]
		1, & r \in \left[\eps,\eps^{-1}\right],\\[5pt]
		-\frac{r\eps^2}{1-\eps}+\frac{1}{1-\eps}, &
		r \in \left[\eps^{-1},\eps^{-2}\right],\\[5pt]
		0, & \text{otherwise}.
	\end{array}\right.$$
	We notice that $\eta_\epsilon$ is continuous, $0\leq \eta_\eps \leq 1$, ${\rm supp}( \eta_\eps) =[\eps^2, \eps^{-2}]$  and its weak derivative is:
	\begin{equation}
	\label{EtaEpsPrim}
	\eta'_\eps(r)=\left\{\begin{array}{cl}
		-\frac{1}{r\log\eps}, & r\in (\eps^2,\eps),\\[5pt]
		-\frac{\eps^2}{1-\eps}, &
		r \in (\eps^{-1},\eps^{-2}),\\[5pt]
		0, & \text{otherwise}.
	\end{array}\right.
	\end{equation}
	In order to obtain a sequence $(\psi_\eps)_{\eps>0}$ of smooth functions, we consider $(\rho_{\eps})_{\eps>0}$ a standard sequence of mollifiers (\cite[p.108]{brezis}), $\rho_\eps(r):=\frac{1}{\eps^4}\rho\left(\frac{r}{\eps^4}\right)$, where 
 $\rho$ is a non-negative $C_c^\infty(\RR)$ function supported in $[-1,1]$ and having mass one.
  Then, 
${\rm supp}(\rho_\eps)\subseteq\left[-\eps^4,\eps^4\right]$.

	The desired sequence is obtained as the convolution $\psi_\eps=\eta_\eps*\rho_\eps$, which we prove that satisfies properties \eqref{psi2}-\eqref{psi1} above. Since $0\leq 
	\eta_\eps\leq 1$ and $\rho_\eps$ has mass one, we get
	$$\psi_\eps(r)=\int_{-\eps^4}^{\eps^4} \eta_\eps(r-s)\rho_\eps(s) ds\in [0,1]$$
	If $r\in [\eps+\eps^4,\eps^{-1}-\eps^4]$, then, for any $s\in [-\eps^4,\eps^4]$, $r-s\in [\eps,\eps^{-1}]$, so $\eta_\eps(r-s)=1$ and thus
	$\psi_\eps(r)= 1$.
	Similarly, one can prove that the support of $\psi_\eps$ is contained in $[\eps^2-\eps^4,\eps^{2}+\eps^4]$.
	
	We now to prove \eqref{psi1}.
	We notice that:
	\begin{equation}
	\label{PsiPrimConvolution}
	\psi_\eps'(r)=\int_{-\eps^4}^{\eps^4} \eta'_\eps(r-s)\rho_\eps(s) ds, 
	\end{equation}
	so we can do the same reasoning as above to get that:
	$${\rm supp}(\psi_\eps')\subseteq [\eps^2-\eps^4,\eps+\eps^4]\cup [\eps^{-1}-\eps^4,\eps^{-2}+\eps^4].$$ 
	When $r\in [\eps^2-\eps^4,\eps+\eps^4]$ and $s\in [-\eps^4,\eps^4]$, we have $r-s\geq r-\eps^4\geq r/2$, then the particular form of $\eta_\eps'$, given in \eqref{EtaEpsPrim}, together with \eqref{PsiPrimConvolution}, implies that,
	$$|\psi'(r)|\leq -\frac{1}{\log\eps}\int_{-\eps^4}^{\eps^4} \frac{1}{r-\eps^4}\rho_{\eps}(s)ds=-\frac{1}{(\log\eps)(r-\eps^4)}\leq \frac {-2}{r\log \eps }.$$
	For $r\in [\eps^{-1}-\eps^4, \eps^{-2}+\eps^4]$ a similar argument shows that 
	\[
	|\psi'(r)|\leq \frac{\eps^2}{1-\eps}.
	\]
	We split the integral in \eqref{psi1} into a sum $I_1(\eps)+I_2(\eps)$, where
	$$I_1(\eps)=\int_{\eps^2-\eps^4}^{\eps+\eps^4} e^{-\frac{r^2}{4}} r^{1+\zeta}  |\psi_\eps'(r)|^2dr$$
	and
	$$I_2(\eps)=\int_{\eps^{-1}-\eps^4}^{\eps^{-2}+\eps^4} e^{-\frac{r^2}{4}} r^{1+\zeta} |\psi_\eps'(r)|^2dr.$$
	It is clear that $I_2(\eps)$ tends to $0$ as $\eps$ approaches $0$, so we focus on $I_1(\eps)$. 	
	Therefore:
	$$I_1(\eps)\leq \frac{4}{(\log(\eps))^2}\int_{\eps^2-\eps^4}^{\eps+\eps^4} e^{-\frac{r^2}{4}} r^{\zeta-1}   dr.$$
	For $\zeta>0$, this immediately converges to $0$ as $\eps$ goes to $0$. For $\zeta=0$, we get that
	$$I_1(\eps)\leq 4 \frac{\log(2\eps)-\log(\eps^2/2)}{(\log(\eps))^2}=-\frac{4}{\log\eps},$$
	which also converges to $0$.
 \end{proof}
	
	\section{Functional and spectral framework}
	\label{functionalSpectral}
    One of the consequences of Theorem \ref{main_th1} is that the seminorm $\|\cdot\|_{l_\lambda}$ -- induced by $l_\lambda$ defined in \eqref{intro.defllambdaQuadratic} --  bounds from above the $L^2$ norm,
    \begin{equation}
    \label{lNormBoundsL2}
    \|u\|^2_{l_\lambda}\geq \frac{1+m_\lambda}{2} \|u\|^2_{L^2},
    \end{equation}
	for every $u\in C_c^{\infty}(\RR_+^{N,k})$.
	As a result, $\|\cdot\|_{l_\lambda}$ is actually a norm on $C_c^\infty(\RR_+^{N,k})$. We can extend it to a complete norm, by considering the space
	\begin{equation}
    \label{defH}
    H_\lambda=\overline{C_c^\infty(\RR_+^{N,k})}^{\|\cdot\|_{l_\lambda}},
    \end{equation}
	the closure with respect to the $\|\cdot\|_{l_\lambda}$ norm, which makes $H_\lambda$ a Hilbert space.
	More precisely, the estimate \eqref{lNormBoundsL2} implies that $H_\lambda$ can be defined as:
	$$H_\lambda=\left\{u\in L^2(\RR^{N,k}_+) : \exists (u_n)_n\subset C_c^\infty(\RR^{N,k}_+)\text{ Cauchy sequence in }\|\cdot\|_{l_\lambda}\text{ s.t. }\lim_{n\to \infty}\|u_n-u\|_{L^2}=0\right\}.$$

	 Inequality \eqref{lNormBoundsL2} implies that $H_\lambda$ is continuously embedded in $L^2$, which allows us to follow \cite[Theorem VIII.15]{ReedSimon} and define the operator $\mathcal{L}_\lambda: H_\lambda\to H_\lambda^*$, $\mathcal{L}_\lambda(u)= l_\lambda(u,\cdot)$, which is the Riesz isomorphism between $H_\lambda$ and $H_\lambda^*$, where $l_\lambda(\cdot,\cdot)$ is the bilinear form associated to $l_\lambda[\cdot]$ defined in \eqref{intro.defllambdaQuadratic}-\eqref{intro.defH}.
	 
The same result enables us to associate to the bilinear form $l_\lambda$ defined on $H_\lambda$ a self-adjoint unbounded operator $L_\lambda$ on $L^2$ with domain:
	\begin{equation}
	\label{def.DLlambda}
    D(L_\lambda)=\left\{u\in H_\lambda : \mathcal{L}_\lambda u \in L^2(\RR^{N,k}_+)\right\}.
	\end{equation}
	The integration by parts formula implies the operator $L_\lambda$ acts on $C_c^\infty(\RR^{N,k}_+)$ functions as follows:
	$$L_\lambda u=-\Delta u -\lambda \frac{u}{|x|^2}+\frac{|x|^2}{16}u,$$
	so we have successfully extended the operator in \eqref{LLambdaFormal} on the space $H_\lambda$.
	
	\begin{remark}
	    In fact, Theorem \ref{main_th1} asserts that 
	$$L_\lambda \geq \frac{1+m_\lambda}{2}, \quad \mathrm{where}\  m_\lambda=\sqrt{\lambda_{N, k}-\lambda},\  \lambda_{N, k}=\left(\frac{N-2}{2}+k\right)^2,$$
	in the sense of quadratic forms in $L^2(\RR_{+}^{N, k})$. 
	\end{remark}

	\subsection{A remark about the space $H_\lambda$ in the subcritical case}
	If $\lambda$ is subcritical, we can characterise more accurately the space $H_\lambda$ defined above:

	\begin{proposition}
		If $\lambda\in (-\infty,\lambda_{N,k})$, then
		$$H_\lambda={H_0^1}(\RR_+^{N,k})\cap L^2(\RR_+^{N,k};|\cdot|^2),$$
		that is:
		$$H_\lambda=\left\{u\in H_0^1(\RR_+^{N,k}) : \int_{\RR_+^{N,k}} |u(x)|^2\, |x|^2 dx<\infty\right\},$$
		and the norms $\|\cdot\|_{l_\lambda}$ and $\|\cdot\|_{H^1}+\|\cdot\|_{L^2(\RR_+^{N,k};|\cdot|^2)}$ are equivalent.\newline
	\end{proposition}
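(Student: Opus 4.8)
The plan is to establish the two-sided inequality between the norms $\|\cdot\|_{l_\lambda}$ and $\|\cdot\|_{H^1}+\|\cdot\|_{L^2(\RR_+^{N,k};|\cdot|^2)}$ on $C_c^\infty(\RR_+^{N,k})$; since $H_\lambda$ and $H_0^1(\RR_+^{N,k})\cap L^2(\RR_+^{N,k};|\cdot|^2)$ are both defined as the closure of $C_c^\infty(\RR_+^{N,k})$ under the respective norms, the equivalence of norms immediately yields the equality of the spaces. The easy direction is $\|u\|_{l_\lambda}^2 \le C(\|u\|_{H^1}^2 + \|u\|_{L^2(|\cdot|^2)}^2)$: the gradient and the $|x|^2/16$ terms in $l_\lambda$ are trivially controlled, and the singular term $-\lambda\int u^2/|x|^2$ is bounded (in absolute value, for either sign of $\lambda$) by $|\lambda|\,\lambda_{N,k}^{-1}\int|\nabla u|^2$ via the classical Hardy inequality \eqref{Hardy_k}.

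The substantive direction is to bound $\int|\nabla u|^2 + \int u^2|x|^2$ from above by a constant times $l_\lambda[u]$. First I would handle the $|x|^2$ term: it appears in $l_\lambda$ with coefficient $1/16$, and the remaining two terms of $l_\lambda$ together are nonnegative — indeed $\int|\nabla u|^2 - \lambda\int u^2/|x|^2 \ge (\lambda_{N,k}-\lambda)\int u^2/|x|^2 \ge 0$ when $\lambda < \lambda_{N,k}$ by \eqref{Hardy_k}, so $\frac{1}{16}\int u^2|x|^2 \le l_\lambda[u]$. For the gradient term, the key point is that subcriticality gives a \emph{strict} gain in Hardy: writing $\lambda = (1-\delta)\lambda_{N,k} + (\text{something} \le 0)$, or more simply choosing $\theta\in(0,1)$ with $\lambda \le \theta\lambda_{N,k}$ (possible since $\lambda < \lambda_{N,k}$; if $\lambda\le 0$ take $\theta$ as small as one wishes), one has
\begin{equation*}
\int|\nabla u|^2 - \lambda\int\frac{u^2}{|x|^2} \ge \int|\nabla u|^2 - \theta\lambda_{N,k}\int\frac{u^2}{|x|^2} \ge \int|\nabla u|^2 - \theta\int|\nabla u|^2 = (1-\theta)\int|\nabla u|^2,
\end{equation*}
so $\int|\nabla u|^2 \le (1-\theta)^{-1} l_\lambda[u]$. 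Adding the two bounds gives $\|u\|_{H^1}^2 + \|u\|_{L^2(|\cdot|^2)}^2 \le C\, l_\lambda[u]$ with $C$ depending only on $N,k,\lambda$, and combined with \eqref{lNormBoundsL2} (which controls the $L^2$ norm, completing the $H^1$ norm) this finishes the nontrivial estimate.

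I expect the main obstacle to be purely bookkeeping rather than conceptual: one must be careful that the choice of $\theta$ works uniformly for all $\lambda$ in the stated range, including large negative $\lambda$ (where $-\lambda\int u^2/|x|^2$ is a positive term one simply discards for the gradient bound, but which must be re-controlled by $\int|\nabla u|^2$ for the reverse inequality — this is exactly where the classical Hardy inequality is used in the opposite direction and where the constant $C$ picks up its $|\lambda|$-dependence). Once the norm equivalence is in hand on $C_c^\infty$, the identification of the completions is immediate from the definitions \eqref{defH} and the standard description of $H_0^1\cap L^2(|\cdot|^2)$ as a closure of test functions, so no further density argument is needed.
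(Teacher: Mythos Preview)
Your proposal is correct and follows essentially the same route as the paper. Both arguments establish the two-sided norm equivalence on $C_c^\infty(\RR_+^{N,k})$ by applying the Hardy inequality \eqref{Hardy_k} to absorb the singular term into the gradient term (your parameter $\theta$ is exactly the paper's $\max\{0,\lambda\}/\lambda_{N,k}$ in the lower bound), and both invoke Theorem~\ref{main_th1} to recover the $L^2$ part of the $H^1$ norm. The only cosmetic difference is that the paper packages both pieces of the lower bound into a single line, whereas you treat the $|x|^2$-term and the gradient term separately; the content is identical. One small remark: your final sentence asserts that $H_0^1\cap L^2(|\cdot|^2)$ is \emph{by definition} a closure of test functions, so ``no further density argument is needed''; strictly speaking this density is a (routine) fact requiring a cut-off and mollification step, which the paper at least flags explicitly before dismissing it as standard.
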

	\begin{proof}
		First, we prove that, on $C_c^\infty(\RR_+^{N,k})$, the two norms above are equivalent. Indeed, for $u\in C_c^\infty(\RR_+^{N,k})$, Hardy's inequality \eqref{Hardy_k} implies that:
		\[l_\lambda[u]\geq {\scriptstyle\left(1-\frac{\max\{0,\lambda\}}{\lambda_{N,k}}\right)}\int_{\RR^{N,k}_+} |\nabla u |^2\dx +\frac{1}{16}\|u\|^2_{L^2(\RR_+^{N,k};|\cdot|^2)}\]
		and
		 \[l_\lambda[u]\leq {\scriptstyle \left(1+\frac{\max\{0,-\lambda\}}{\lambda_{N,k}}\right)}\int_{\RR^{N,k}_+} |\nabla u |^2 \dx+ \frac{1}{16}\|u\|^2_{L^2(\RR_+^{N,k};|\cdot|^2)}.\]
		 
        These inequalities, together with Theorem \ref{main_th1}, imply the equivalence of the norms in the case of smooth compactly supported functions.
	
		The fact that $C_c^\infty(\RR_+^{N,k})$ is dense in 
		$H_0^1(\RR_+^{N,k})\cap L^2(\RR_+^{N,k}; |\cdot|^2)$
		 follows by a standard cut-off and mollification argument, so the Proposition is proved.
	\end{proof}
	\subsection{Compact embedding}
	In this section, we go further and prove that the embedding $H_\lambda\subset L^2$ is compact, which will lead to $L_\lambda$ having compact inverse.
	
	\begin{proposition}
		The embedding $H_\lambda\subset L^2(\RR_+^{N,k})$ is compact.
	\end{proposition}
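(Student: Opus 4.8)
The plan is to show that every sequence $(u_n)_n$ bounded in $H_\lambda$ is relatively compact in $L^2(\RR_+^{N,k})$. Since $H_\lambda$ is a Hilbert space which, by \eqref{lNormBoundsL2}, embeds continuously in $L^2(\RR_+^{N,k})$, we may extract a subsequence with $u_n\rightharpoonup u$ weakly in $H_\lambda$, hence weakly in $L^2$; replacing $u_n$ by $u_n-u$ it is enough to treat the case $u_n\rightharpoonup 0$ and to prove $\|u_n\|_{L^2}\to 0$. I would split the norm of $u_n$ over three regions: the far zone $\{|x|>R\}$, an intermediate shell $\Omega_{\delta,R}:=\{\delta<|x|<R\}\cap\RR_+^{N,k}$, and a small neighbourhood $\{|x|<\delta\}\cap\RR_+^{N,k}$ of the corner vertex, and show that the contributions of the first and last are uniformly small in $n$ once $R$ is large and $\delta$ small, while on each fixed shell $u_n\to 0$ strongly.

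The far zone is controlled by the quadratic confinement. By Hardy's inequality \eqref{Hardy_k}, $\int|\nabla v|^2-\lambda\int|v|^2/|x|^2\ge 0$ on $C_c^\infty(\RR_+^{N,k})$ for every $\lambda\le\lambda_{N,k}$, hence $l_\lambda[v]\ge \tfrac1{16}\int|x|^2|v|^2$ there, and a density argument together with Fatou's lemma extends this to all $v\in H_\lambda$. Consequently $\int_{|x|>R}|u_n|^2\le \tfrac{16}{R^2}\,l_\lambda[u_n]\le CR^{-2}$, uniformly in $n$. For the shell $\Omega_{\delta,R}$ I would first establish a local $H^1$-bound $\|v\|_{H^1(\Omega_{\delta,R})}\le C(\delta,R)\,\|v\|_{l_\lambda}$ for $v\in H_\lambda$: choosing a cut-off $\chi\in C_c^\infty(\RR^N\setminus\{0\})$ equal to $1$ on $\overline{\Omega_{\delta,R}}$, one expands $\int\chi^2|\nabla v|^2$ in terms of the bilinear form $l_\lambda(v,\chi^2 v)$ plus lower-order contributions (note that $|x|^{-2}$ and $|x|^2$ are bounded on $\operatorname{supp}\chi$) and absorbs the recurring term $\int\chi^2|\nabla v|^2$ on the right through a quadratic inequality. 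Since the approximating functions vanish near $\partial\RR_+^{N,k}$, extending $u_n|_{\Omega_{\delta,R}}$ by zero yields a bounded sequence in $H^1$ of the smooth annulus $\{\delta<|x|<R\}$, so by the Rellich--Kondrachov theorem a subsequence converges strongly in $L^2(\Omega_{\delta,R})$, its limit being $0$ because $u_n\rightharpoonup 0$. A diagonal extraction over $\delta\downarrow 0$ and $R\uparrow\infty$ then gives $u_n\to 0$ in $L^2_{\mathrm{loc}}(\RR_+^{N,k})$.

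The remaining point -- and the main obstacle -- is the uniform smallness of $\int_{\{|x|<\delta\}}|u_n|^2$ near the vertex, where the inverse-square potential is active. In the subcritical regime it is immediate: $H_\lambda\hookrightarrow H_0^1(\RR_+^{N,k})$, and Hardy's inequality gives $\int|u_n|^2/|x|^2\le C$, whence $\int_{|x|<\delta}|u_n|^2\le\delta^2\int|u_n|^2/|x|^2\le C\delta^2$. For the critical value $\lambda=\lambda_{N,k}$ I would use the ground-state substitution of Section~\ref{section:improvedHardy}: writing $u_n=\alpha_{k,\lambda}\,\phi_n$, identity \eqref{lNormMinusL2norm} gives $\int\alpha_{k,\lambda}^2|\nabla\phi_n|^2\le l_\lambda[u_n]\le C$ and $\int\alpha_{k,\lambda}^2\phi_n^2=\|u_n\|_{L^2}^2\le C$. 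In spherical coordinates $x=r\omega$ one has $\alpha_{k,\lambda}^2(r\omega)\,r^{N-1}\asymp r\,(\omega_{N-k+1}\cdots\omega_N)^2$ as $r\to 0$ (valid for every $k\ge 0$, the angular factor being the empty product when $k=0$), so the estimate reduces to the one-dimensional bound $\int_0^\delta r\,g(r)^2\,dr\le \kappa(\delta)\,(g(1)^2+\int_0^1 r\,g'(r)^2\,dr)$ with $\kappa(\delta)\to 0$, which follows from the pointwise inequality $g(r)^2\le 2g(1)^2+2\log(1/r)\int_0^1 s\,g'(s)^2\,ds$. Integrating this against the weight $(\omega_{N-k+1}\cdots\omega_N)^2$ over the spherical corner, the $g'$-term is dominated by $\int\alpha_{k,\lambda}^2|\nabla\phi_n|^2$ and the $g(1)^2$-term by the $L^2$-norm of the trace of $u_n$ on the sphere $\{|x|=1\}$, which is finite by the local $H^1$-estimate on a fixed shell. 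This yields $\int_{|x|<\delta}|u_n|^2\le C\kappa(\delta)$, uniformly in $n$. Combining the three regional estimates gives $\|u_n\|_{L^2}\to 0$ along the subsequence; since this argument applies to every subsequence, the full sequence converges in $L^2$, which is the claimed compactness.
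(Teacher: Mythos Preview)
Your proof is correct and follows a route genuinely different from the paper's. Both arguments begin the same way---reduce to a sequence converging weakly to zero and use the confining term $\tfrac{|x|^2}{16}$ to make the mass at infinity uniformly small---but they diverge in how the singularity at the origin is handled. The paper does \emph{not} split off a small ball: it localises with a single radial cut-off $\phi_R$ supported in $B_{R+1}$, shows that $\psi_n^1:=\psi_n\phi_R$ satisfies $\int|\nabla\psi_n^1|^2-\lambda\int|\psi_n^1|^2/|x|^2\le C$ uniformly in $n$, and then invokes \cite[Theorem~2.2]{VazquezZuazua}, which converts such a bound into a $W^{1,q}(B_{R+1})$ bound for every $q<2$; Rellich--Kondrachov with $q\in(\tfrac{2N}{N+2},2)$ then yields strong $L^2$-convergence on the whole ball in one stroke. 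Your three-region decomposition instead isolates $\{|x|<\delta\}$ and treats it directly: in the subcritical case by the plain Hardy inequality, and in the critical case by the ground-state substitution $u_n=\alpha_{k,\lambda}\phi_n$ of Section~\ref{section:improvedHardy} combined with a one-dimensional logarithmic Hardy estimate and a trace bound on $\{|x|=1\}$. This makes the argument self-contained---no external $W^{1,q}$ embedding is needed---at the price of a longer, more hands-on treatment of the critical case. One point to make explicit: identity~\eqref{lNormMinusL2norm} is stated only for $u\in C_c^\infty(\RR_+^{N,k})$, so you should either reduce at the outset to $u_n\in C_c^\infty(\RR_+^{N,k})$ (as the paper does, by replacing each $u_n$ with a nearby smooth function) or argue that the identity extends to $H_\lambda$ by density.
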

	\begin{proof}
	    We proceed by contradiction. Let $(\psi_n)_{n\geq 1}$ be a bounded sequence in $H_\lambda$ which has no convergent subsequence in $L^2$. Since $H_\lambda$ is the closure of $C_c^\infty(\RR_+^{N,k})$ w.r.t. $\|\cdot\|_{l_\lambda}$ norm, we can assume that every $\psi_n$ is in $C_c^\infty(\RR_+^{N,k})$. Otherwise, we can choose the sequence $(\widehat{\psi}_n)_n\subset C_c^\infty(\RR_+^{N,k})$ such that $\|\psi_n-\widehat{\psi}_n\|_{l_\lambda}<\frac{1}{n}$, which is also bounded and converges in $L^2$ on a subsequence if and only if the original sequence $(\psi_n)_{n\geq 1}$ converges on a subsequence.
	    
	    It is standard to assume further that $\psi_n$ converges weakly to $0$ in $H_\lambda$, so also in $L^2$. Furthermore, since there is no subsequence of $(\psi_n)_{n\geq 1}$ that converges to $0$ in $L^2$,  then $\left(\|\psi_n\|_{L^2}\right)_n$ must be bounded from below away from $0$. Therefore, we can assume without losing generality that $\|\psi_n\|_{L^2}=1$. 
	    
	    Let us consider $M>0$ such that $\|\psi_n\|_{l_\lambda}\leq M,\forall n\geq 1$ i.e.:
		\begin{equation}
			\label{lNormBoundedM}
			\int_{\RR_+^{N,k}}\left[ |\nabla\psi_n|^2-\frac{\lambda}{|y|^2} |\psi_n|^2+\frac{|y|^2}{16}|\psi_n|^2\right] \dy\leq M,\forall n \geq 1.
		\end{equation}
		
	    In what follows, the general idea is to make use of the last term above in order to transfer the problem on a bounded domain, for which we know a compactness result.
	    First, using Hardy's inequality \eqref{Hardy_k}, the estimate \eqref{lNormBoundedM} implies that:
		$$\int_{\RR_+^{N,k}} \frac{|x|^2}{16}|\psi_n|^2\dx \leq M,\forall n\geq 1.$$
		Therefore, if we choose $R$ such that $\frac{16M}{R^2}<\frac{1}{2}$, we obtain:
		\begin{equation}
		\label{compactness.normPsi2}
		\int_{\RR_+^{N,k}\setminus B_R} |\psi_n|^2\dx \leq \frac{1}{2},\forall n\geq 1.
		\end{equation}
	    
	    Next, we construct a radial mollifier $\phi:\RR\to [0,\infty)$, which is smooth, $\phi\equiv 1$ on $(-\infty,0]$ and $\phi\equiv 0$ on $[1,\infty)$. Then, we consider, for every $R>0$,
		$\phi_R(x)=\phi(|x|-R).$
		The function $\phi_R$ satisfies:
		\begin{enumerate}[label=(\arabic*)]
			\item $\phi_R\in C_c^\infty(\RR^N)$, ${\rm supp}(\phi_R)\subseteq B_{R+1}$,
			\item $\phi_R\equiv 1$ on $B_R$,
			\item $|\phi_R|\leq 1$,
			\item $\nabla \phi_R$ and $\Delta \phi_R$ are bounded by a constant independent of $R$.
		\end{enumerate}
		Here, $B_R$ stands for the ball in $\RR^N$ with radius $R$, which is centered at the origin.
		With this properties in mind, we decompose every $\psi_n$:
		$$\psi_n=\psi_n\,  \phi_R +\psi_n\,  (1-\phi_R)=\psi_n^1+\psi_n^2.$$
		We obtain
		\begin{align}		\label{compactness.lLambdaNormBoundBelow}
		\|\psi_n\|_{l_\lambda}^2=& \int_{\RR_+^{N,k}}\left[|\nabla \psi_n^1|^2
  -\lambda\frac{|\psi_n^1|^2}{|x|^2}\right]\dx+\int_{\RR_+^{N,k}}\left[|\nabla \psi_n^2|^2\dx-\lambda\frac{|\psi_n^2|^2}{|x|^2}\right]\dx\\
&+		2\int_{\RR_+^{N,k}} \nabla \psi_n^1 \cdot \,  \nabla \psi_n^2 \dx-2\lambda\int_{\RR_+^{N,k}}\frac{|\psi_n|^2\, \phi_R(1-\phi_R)}{|x|^2}\dx +\frac{1}{16}\int_{\RR_+^{N,k}}  |\psi_n|^2\, |x|^2\dx.\nonumber
		\end{align}

		Further, since $\phi_R(1-\phi_R)$ is supported in $[R,R+1]$, we take $R$ is large enough such that $\frac{R^2}{16}\geq \frac{2\lambda}{|x|^2}$. As a result,
		$$-2\lambda\int_{\RR_+^{N,k}}\frac{|\psi_n|^2\phi_R(1-\phi_R)}{|x|^2} \dx+\frac{1}{16}\int_{\RR_+^{N,k}}  |\psi_n|^2\, |x|^2\dx\geq 0.$$
		Therefore, Hardy's inequality \eqref{Hardy_k} applied to $\psi_n^2$, together with \eqref{compactness.lLambdaNormBoundBelow}, implies that:
		$$\|\psi_n\|^2_{l_\lambda}\geq \int_{\RR_+^{N,k}}\left[|\nabla \psi_n^1|^2-\lambda \frac{|\psi_n^1|^2}{|x|^2}\right]\dx +2\int_{\RR_+^{N,k}} \nabla \psi_n^1 \cdot \nabla\psi_n^2\dx. $$
		From the definition of $\psi_n^1$ and $\psi_n^2$, we obtain, integrating by parts, that 
		$$2\int_{\RR_+^{N,k}} \nabla \psi_n^1 \cdot \nabla\psi_n^2\dx=\int_{\RR_+^{N,k}} |\nabla \psi_n|^2 \phi_R(1-\phi_R)\dx +\int_{\RR_+^{N,k}} |\psi_n|^2\left[\frac{1}{2}\Delta\phi_R-|\nabla\phi_R|^2\right]\dx.$$
		Therefore, 
		$$\|\psi_n\|_{l_\lambda}^2\geq \int_{\RR_+^{N,k}}\left[|\nabla \psi_n^1|^2-\lambda\frac{|\psi_n^1|^2}{|x|^2}\right]\dx +\int_{\RR_+^{N,k}} |\psi_n|^2\left[\frac{1}{2}\Delta\phi_R-|\nabla\phi_R|^2\right]\dx. $$
		Since $\left[\frac{1}{2}\Delta\phi_R-|\nabla\phi_R|^2\right]$ is bounded by a constant independent of $R$ and $\|\psi_n\|_{L^2}=1$, we deduce that for some positive constant $C$,
		$$\int_{\RR_+^{N,k}}\left[|\nabla \psi_n^1|^2-\lambda\frac{|\psi_n^1|^2}{|x|^2}\right]\dx\leq C, \ \forall n\geq 1.$$
	
		Since $\psi_n^1\in H_0^1(B_{R+1})$, it follows from 
		\cite[Theorem 2.2]{VazquezZuazua}  that $(\psi_n^1)_{n\geq 1}$ is bounded in any $W^{1,q}(B_{R+1})$ with $q\in [1,2)$. Choosing $q\in (\frac{2N}{N+2},2)$,  by Rellich-Kondrachov Theorem \cite[Theorem 9.16]{brezis}, $(\psi_n^1)_n$ converges strongly up to a subsequence in $L^2$ to some $\psi^1$.
	
		Next, since $\psi_n \rightharpoonup 0$ in $L^2$, and $\phi_R$ is bounded,
		$$\lim_{n\to \infty}\int_{\RR_{+}^{N, k}} \psi_n \, \phi_R\,\xi \dx =0, \forall \xi\in L^2.$$
		It follows that $\psi_n^1\rightharpoonup 0 $ in $L^2$ and the strong convergence result above implies that $\psi_n^1\to 0$ in $L^2$ up to a subsequence. This and 
		 \eqref{compactness.normPsi2} contradict $\|\psi_n\|_{L^2}=1$ for all $n\geq 1$. 
The conclusion follows.
	\end{proof}
    The compact embedding $H_\lambda\subset L^2$ can now be used to prove that the operator $L_\lambda$ has compact inverse.
	\begin{theorem}
		The operator $L_\lambda:D(L_\lambda)\to L^2(\RR^{N,k}_+)$ has compact inverse. More precisely, there exists a compact operator
		$L_\lambda^{-1}:L^2\to D(L_\lambda)$
		such that
		$$L_\lambda^{-1}\circ L_\lambda={\rm id}_{D(L_\lambda)}\hspace{0.3cm}\text{ and }\hspace{0.3cm}L_\lambda\circ L_\lambda^{-1}={\rm id}_{L^2}.$$
	\end{theorem}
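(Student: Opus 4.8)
The plan is to realise $L_\lambda^{-1}$ as the composition of a bounded operator $L^2\to H_\lambda$ with the compact embedding $H_\lambda\hookrightarrow L^2$ proved in the preceding proposition. First I would apply the Lax--Milgram theorem (or simply the Riesz representation theorem, since the bilinear form $l_\lambda(\cdot,\cdot)$ is symmetric) on the Hilbert space $H_\lambda$: by Theorem \ref{main_th1} the form is coercive, $l_\lambda[u]=\|u\|_{l_\lambda}^2\geq \frac{1+m_\lambda}{2}\|u\|_{L^2}^2>0$, and it is bounded on $H_\lambda$ by construction of the norm, so the Riesz map $\mathcal{L}_\lambda\colon H_\lambda\to H_\lambda^*$ is a bijection with bounded inverse. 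Since $H_\lambda$ embeds continuously into $L^2$, every $f\in L^2$ induces the bounded functional $v\mapsto \langle f,v\rangle_{L^2}$ on $H_\lambda$, i.e. $L^2\hookrightarrow H_\lambda^*$ continuously.

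Next, given $f\in L^2$, I set $u:=\mathcal{L}_\lambda^{-1}f\in H_\lambda$, characterised by $l_\lambda(u,v)=\langle f,v\rangle_{L^2}$ for all $v\in H_\lambda$. By the definition \eqref{def.DLlambda} of $D(L_\lambda)$, the fact that $\mathcal{L}_\lambda u=f\in L^2$ gives $u\in D(L_\lambda)$ with $L_\lambda u=f$; conversely $L_\lambda$ is injective because $L_\lambda\geq \frac{1+m_\lambda}{2}>0$ in the sense of quadratic forms. Hence $L_\lambda$ is a bijection from $D(L_\lambda)$ onto $L^2$, and setting $L_\lambda^{-1}f:=u$ yields an operator satisfying $L_\lambda^{-1}\circ L_\lambda=\mathrm{id}_{D(L_\lambda)}$ and $L_\lambda\circ L_\lambda^{-1}=\mathrm{id}_{L^2}$.

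Then I would record the energy estimate: choosing $v=u$ in $l_\lambda(u,v)=\langle f,v\rangle_{L^2}$ and using the continuous embedding $\|u\|_{L^2}\leq C\|u\|_{l_\lambda}$ gives $\|u\|_{l_\lambda}^2=\langle f,u\rangle_{L^2}\leq \|f\|_{L^2}\|u\|_{L^2}\leq C\|f\|_{L^2}\|u\|_{l_\lambda}$, hence $\|L_\lambda^{-1}f\|_{H_\lambda}=\|u\|_{l_\lambda}\leq C\|f\|_{L^2}$, so $L_\lambda^{-1}\colon L^2\to H_\lambda$ is bounded. Composing with the compact embedding $H_\lambda\hookrightarrow L^2$ from the previous proposition shows that $L_\lambda^{-1}\colon L^2\to L^2$ is compact; since it in fact takes values in $D(L_\lambda)$, the statement follows.

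There is no serious obstacle here: the whole content is carried by the coercivity in Theorem \ref{main_th1} and by the compactness of $H_\lambda\hookrightarrow L^2$. The only point requiring mild care is the bookkeeping among the three objects $\mathcal{L}_\lambda$ (valued in $H_\lambda^*$), the $L^2$-realisation $L_\lambda$ obtained via \cite[Theorem VIII.15]{ReedSimon}, and the continuous inclusion $L^2\hookrightarrow H_\lambda^*$, so that the condition ``$\mathcal{L}_\lambda u\in L^2$'' is interpreted correctly and the identification $L_\lambda=\mathcal{L}_\lambda|_{D(L_\lambda)}$ is used consistently.
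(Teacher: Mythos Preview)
Your proof is correct and follows essentially the same approach as the paper: the paper factors $L_\lambda^{-1}$ through the diagram $L^2\xhookrightarrow{i_2} H_\lambda^*\xrightarrow{\mathcal{L}_\lambda^{-1}}H_\lambda\xhookrightarrow{i_1}L^2$ and obtains compactness from the compact embedding $i_1$, which is exactly what you do (with the Riesz/Lax--Milgram isomorphism $\mathcal{L}_\lambda$ and the continuous inclusion $L^2\hookrightarrow H_\lambda^*$). Your additional energy estimate and the explicit injectivity remark are more detailed than the paper's presentation but not a different argument.
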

	\begin{proof}
		We consider the following diagram:
		$$L^2\xhookrightarrow[cont.]{i_2} H_\lambda^*\xrightarrow{\mathcal{L}_\lambda^{-1}}H_\lambda\xhookrightarrow[comp.]{i_1}L^2,$$
		which gives us the desired operator $L_\lambda^{-1}$. 
		
		Set  $T:=i_1\circ \mathcal{L}_\lambda^{-1}\circ i_2$ be the operator defined using the diagram. In this setting, the definition \eqref{def.DLlambda} of $D(L_\lambda)$ can be written more exactly as:
		$$D(L_\lambda)=\{i_1(u): u\in H_\lambda\text{ s.t. } \exists v\in L^2\text{ satisfying }\mathcal{L}_\lambda u=i_2(v)\}.$$
		Therefore, it can be easily seen that
	$i_1(\mathcal{L}_\lambda^{-1}(i_2(L^2)))=D(L_\lambda),$ 
	    so the image of $T$ is exactly $D(L_\lambda)$.
		Further, for $u\in D(L_\lambda)$,
		$T(L_\lambda(u))=i_1(\mathcal{L}_\lambda^{-1}(i_2(\mathcal{L_\lambda}(u))))=u,$
		so $T\circ L_\lambda={\rm id}_{D(L_\lambda)}$. Similarly, we obtain $L_\lambda\circ T={\rm id}_{L^2}$.
		
		Finally, the operator $T$ is compact, as a composition of a compact operator with two continuous ones, so the proof is finished.
		
	\end{proof} 
	\subsection{The first eigenvalue of $L_\lambda$}
	\label{firstEigenvalue}
	In the previous section, we proved that the operator $L_\lambda$ is self-adjoint and has compact inverse. Therefore, its spectrum is discrete, the eigenvalues form an increasing sequence and they have corresponding eigenvectors which span a dense subspace of $L^2(\RR_+^{N,k})$. 
	
	Theorem \ref{main_th1} implies that no eigenvalue is less than $\frac{1+m_\lambda}{2}$, where we recall that $m_\lambda=\sqrt{\lambda_{N,k}-\lambda}$.
	In this section, we will prove that $\frac{1+m_\lambda}{2}$ is indeed an eigenvalue and we also find the corresponding eigenspace.
	\begin{theorem}
		\label{firstEigenfunction}
		The first eigenvalue of $L_\lambda$ is $\frac{1+m_\lambda}{2}$, whose eigenspace is spanned by the function $\alpha_{k,\lambda}$ defined in \eqref{minimizers}.
		\begin{proof}
			The proof consists of two steps:\\
			
			\textit{Step 1:} We prove that $\alpha_{k,\lambda} \in H_\lambda$ and $L_\lambda \alpha_{k,\lambda}=\frac{1+m_\lambda}{2} \alpha_{k,\lambda}$.\newline
			Indeed, with the notations in the proof of Theorem \ref{main_th1} (see Section \ref{section:improvedHardy}), $\Lambda_\eps(x)=\alpha_{k,\lambda}(x)\,\psi_\eps(|x|)$ belongs to $C_c^\infty(\RR_+^{N,k})$ and identity \eqref{lNormMinusL2norm} implies that:
			\begin{equation}
			   \label{LambdaEpsNorm} 
			\|\Lambda_\eps-\Lambda_\delta\|_{l_\lambda}=\frac{1+m_\lambda}{2}\|\Lambda_\eps-\Lambda_\delta\|_{L^2}+\int_{\RR_+^{N,k}} \alpha_{k,\lambda}^2(x)|\psi_\eps'(|x|)-\psi_\delta'(|x|)|^2 \dx
			\end{equation}
			As in the proof of Theorem \ref{main_th1}, 
			$\lim_{\eps \to 0}\|\Lambda_\eps-\alpha_{k,\lambda}\|_{L^2}= 0$
		and  $\int_{\RR^{N,k}_+} \alpha_{k,\lambda}^2(x)|\psi_\eps'(|x|)|^2\dx$ converges to zero. Thus,  \eqref{LambdaEpsNorm} implies that $(\Lambda_\eps)_\eps$ is a Cauchy sequence in the $\|\cdot\|_{l_\lambda}$ norm. It follows that the latter sequence is convergent in $H_\lambda$ and the limit must be $\alpha_{k,\lambda}$ (since $H_\lambda$ is continuously embedded in $L^2$). Therefore, $\alpha_{k,\lambda}\in H_\lambda$.
			
			In order to prove that $L_{\lambda}\alpha_{k, \lambda}=\frac{1+m_\lambda}{2}\alpha_{k, \lambda}$, we begin by fixing an arbitrary $\varphi\in C_c^\infty(\RR_+^{N,k})$. We recall that the operator $\mathcal{L}_\lambda:H_\lambda\to H_\lambda^*$ satisfies:
			$$\langle\mathcal{L}_\lambda u,v\rangle_{H_\lambda^*,H_\lambda}=l_\lambda(u,v),\forall u,v\in H_\lambda.$$
			As a result, using the convergence $\Lambda_\eps\to \alpha_{k,\lambda}$ in $H_\lambda$, we obtain that:
			\begin{equation}
				\label{LWeakConv}
				\lim_{\eps \to 0}\langle\mathcal{L}_\lambda\Lambda_\eps,\varphi\rangle_{H_\lambda^*,H_\lambda}= \langle\mathcal{L}_\lambda\alpha_{k,\lambda},\varphi\rangle_{H_\lambda^*,H_\lambda}.
			\end{equation}
			On the other hand, since $\Lambda_\eps(x)=\alpha_{k,\lambda}(x)\, \psi_\eps(|x|)\in C_c^\infty(\RR_+^{N,k})$, it follows that:
			\begin{align*}
			L_{\lambda}\Lambda_\eps&=-\Delta \Lambda_\eps - \frac{\lambda}{|x|^2}\Lambda_\eps+\frac{|x|^2}{16}\Lambda_\eps\\
			&=-\Delta (\alpha_{k,\lambda}(x)\,\psi_\eps(|x|)) - \frac{\lambda}{|x|^2}\alpha_{k,\lambda}(x)\,\psi_\eps(|x|)+\frac{|x|^2}{16}\alpha_{k,\lambda}(x)\, \psi_\eps(|x|).
			\end{align*}
			Let us choose $\eps$ small enough such that, by \eqref{psi3}, $\psi_\eps\equiv 1$ on the support of $\varphi$.  Thus
			$$(L_\lambda\Lambda_\eps,\varphi)_{L^2}=\left(-\Delta \alpha_{k,\lambda} - \frac{\lambda}{|x|^2}\alpha_{k,\lambda}+\frac{|x|^2}{16}\alpha_{k,\lambda},\varphi\right)_{L^2}.$$
			Now, \eqref{miracle_eqn} implies that, for $\eps$ small enough,
			$$\langle \mathcal{L}_\lambda\Lambda_\eps,\varphi\rangle_{H_\lambda^*,H_\lambda}=(L_\lambda \Lambda_\eps,\varphi)_{L^2}=\frac{1+m_\lambda}{2}(\alpha_{k, \lambda},\varphi)_{L^2}=\frac{1+m_\lambda}{2}\langle \alpha_{k, \lambda},\varphi \rangle_{H_\lambda^\star, H_\lambda},$$
			which implies, by \eqref{LWeakConv} and the density of $C_c^\infty(\RR_+^{N,k})$ in $H_\lambda$, that:
			$$\mathcal{L}_\lambda\alpha_{k,\lambda}=\frac{1+m_\lambda}{2}\alpha_{k,\lambda}.$$
			We have obtained that $\alpha_{k,\lambda}\in D(L_\lambda)$ and $L_\lambda\alpha_{k,\lambda}=\frac{1+m_\lambda}{2}\alpha_{k,\lambda}$.\\
			
			\textit{Step 2:} We prove that all eigenfunctions corresponding to the eigenvalue $\frac{1+m_\lambda}{2}$ are in the span of $\alpha_{k,\lambda}$.\\
			Indeed, let $u\in D(L_\lambda)$ such that $L_\lambda u=\frac{1+m_\lambda}{2}u$. Since $u\in H_\lambda$, there exists a sequence $(u_n)_{n\geq 1}$ approximating $u$ in the $\|\cdot\|_{l_\lambda}$ norm and hence in $L^2$.
			By \eqref{lNormMinusL2norm},
			$$\|u_n\|_{l_\lambda}^2-\frac{1+m_\lambda}{2}\|u_n\|_{L^2}^2=\int_{\RR_+^{N,k}} 
			\alpha_{k,\lambda}^2 \left|\nabla\left(\frac{u_n}{\alpha_{k,\lambda}}\right)\right|^2\dx.$$
			Letting $n\rightarrow \infty$, the left hand side converges to zero since 
			\[
			\|u\|_{l_\lambda}^2-\frac{1+m_\lambda}2\|u\|^2_{L^2}=(L_\lambda u,u)_{L^2}-\frac{1+m_\lambda}2\|u\|^2_{L^2}=0.
			\]
			Hence
			\begin{equation}
			\label{limANKGrad}
			\lim_{n\to \infty}\int_{\RR_+^{N,k}} \alpha_{k,\lambda}^2 \left|\nabla\left(\frac{u_n}{\alpha_{k,\lambda}}\right)\right|^2 \dx =0.
			\end{equation}
			With the notations
			$$\xi_n=\frac{u_n}{\alpha_{k,\lambda}} \text{ and }\xi=\frac{u}{\alpha_{k,\lambda}},$$
			we obtain 
			$$\lim_{n\to \infty} \int_{\RR_+^{N,k}} \alpha_{k,\lambda}^2 |\nabla \xi_n|^2 \dx=0$$
			and
			$$\lim_{n\to \infty} \int_{\RR_+^{N,k}} \alpha_{k,\lambda}^2 |\xi_n-\xi|^2 \dx=\lim_{n\to \infty} \|u_n-u\|^2_{L^2}=0.$$
	
			Next, since $\alpha_{k,\lambda}$ is a positive continuous function, we obtain that, 
	$\xi_n\rightarrow \xi$ and $\nabla \xi_n\rightharpoonup 0$ in $L^2_{loc}(\RR^{N,k}_+)$.	
			The above limits imply that the $L^2$ function $\xi$ has null weak gradient on every compact set, so $\xi$ is a constant function on the whole $\RR_+^{N,k}$. 
Therefore, $u\in {\rm Span}(\alpha_{k,\lambda})$, so the proof is finished.
		\end{proof}
	\end{theorem}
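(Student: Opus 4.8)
The plan is to read the first eigenvalue directly off the variational identity \eqref{lNormMinusL2norm} and the explicit pointwise equation \eqref{miracle_eqn}, using only the spectral facts already in place: $L_\lambda$ is self-adjoint with compact inverse, so its spectrum is a discrete increasing sequence of eigenvalues. By Theorem \ref{main_th1}, extended to $H_\lambda$, one has $l_\lambda[u]\ge\frac{1+m_\lambda}{2}\|u\|_{L^2}^2$ for every $u\in H_\lambda$, hence every eigenvalue of $L_\lambda$ is at least $\frac{1+m_\lambda}{2}$. It therefore suffices to prove that $\frac{1+m_\lambda}{2}$ is actually attained, with eigenfunction $\alpha_{k,\lambda}$, and that no other eigenfunction sits at that level.

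\emph{Step 1 (membership and eigenfunction equation).} First I would show $\alpha_{k,\lambda}\in H_\lambda$ using the cut-offs $\Lambda_\eps(x)=\alpha_{k,\lambda}(x)\,\psi_\eps(|x|)\in C_c^\infty(\RR_+^{N,k})$ furnished by Lemma \ref{desing.aprox}. Applying \eqref{lNormMinusL2norm} to the differences $\Lambda_\eps-\Lambda_\delta$ reduces the $\|\cdot\|_{l_\lambda}$-Cauchy property of $(\Lambda_\eps)_\eps$ to the smallness of $\int_0^\infty e^{-r^2/4}r^{1+2m_\lambda}|\psi_\eps'(r)|^2\,dr$, which is precisely property \eqref{psi1}; combined with $\Lambda_\eps\to\alpha_{k,\lambda}$ in $L^2$ and the continuous embedding $H_\lambda\hookrightarrow L^2$, this gives $\Lambda_\eps\to\alpha_{k,\lambda}$ in $H_\lambda$, so $\alpha_{k,\lambda}\in H_\lambda$. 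To identify $L_\lambda\alpha_{k,\lambda}$, I would test against an arbitrary $\varphi\in C_c^\infty(\RR_+^{N,k})$: for $\eps$ small enough $\psi_\eps\equiv 1$ on ${\rm supp}\,\varphi$, so $\Lambda_\eps$ agrees with $\alpha_{k,\lambda}$ there, and the classical identity \eqref{miracle_eqn} gives $(L_\lambda\Lambda_\eps,\varphi)_{L^2}=\frac{1+m_\lambda}{2}(\alpha_{k,\lambda},\varphi)_{L^2}$. Passing to the limit through the $H_\lambda$-convergence of $\Lambda_\eps$ and the density of $C_c^\infty(\RR_+^{N,k})$ in $H_\lambda$ yields $\mathcal{L}_\lambda\alpha_{k,\lambda}=\frac{1+m_\lambda}{2}\alpha_{k,\lambda}$ in $H_\lambda^*$; since the right-hand side is in $L^2$, this gives $\alpha_{k,\lambda}\in D(L_\lambda)$ and $L_\lambda\alpha_{k,\lambda}=\frac{1+m_\lambda}{2}\alpha_{k,\lambda}$. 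Together with the lower bound, $\frac{1+m_\lambda}{2}$ is the first eigenvalue.

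\emph{Step 2 (simplicity).} Let $u\in D(L_\lambda)$ with $L_\lambda u=\frac{1+m_\lambda}{2}u$, and pick $(u_n)_n\subset C_c^\infty(\RR_+^{N,k})$ with $u_n\to u$ in $\|\cdot\|_{l_\lambda}$, hence in $L^2$. By \eqref{lNormMinusL2norm}, $\|u_n\|_{l_\lambda}^2-\frac{1+m_\lambda}{2}\|u_n\|_{L^2}^2=\int_{\RR_+^{N,k}}\alpha_{k,\lambda}^2\,|\nabla(u_n/\alpha_{k,\lambda})|^2$, and the left-hand side tends to $(L_\lambda u,u)_{L^2}-\frac{1+m_\lambda}{2}\|u\|_{L^2}^2=0$. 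Setting $\xi_n=u_n/\alpha_{k,\lambda}$ and $\xi=u/\alpha_{k,\lambda}$, we get $\int\alpha_{k,\lambda}^2|\nabla\xi_n|^2\to 0$ and $\int\alpha_{k,\lambda}^2|\xi_n-\xi|^2=\|u_n-u\|_{L^2}^2\to 0$. Since $\alpha_{k,\lambda}$ is continuous and strictly positive on $\RR_+^{N,k}$, it is bounded above and below away from zero on each compact subset, so $\nabla\xi_n\to 0$ and $\xi_n\to\xi$ in $L^2_{loc}(\RR_+^{N,k})$; hence $\nabla\xi=0$ weakly on every compact set, and since $\RR_+^{N,k}$ is connected, $\xi$ is constant, i.e. $u\in{\rm Span}(\alpha_{k,\lambda})$.

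The hard part will be the localisation in Step 2: converting the global vanishing $\int\alpha_{k,\lambda}^2|\nabla(u_n/\alpha_{k,\lambda})|^2\to 0$ into the statement that $u/\alpha_{k,\lambda}$ is an honest constant. One must check that $u/\alpha_{k,\lambda}$ really is an $L^2_{loc}$ function with a weak gradient (which holds because $\alpha_{k,\lambda}$ and $1/\alpha_{k,\lambda}$ are locally bounded), that $\nabla\xi_n\rightharpoonup\nabla\xi$ locally, and that only local convergence is needed before connectedness of the corner domain closes the argument. A secondary subtlety, already used in Step 1, is extending the identity \eqref{lNormMinusL2norm}---proved on $C_c^\infty(\RR_+^{N,k})$---to the closure $H_\lambda$, which is exactly why the truncation Lemma \ref{desing.aprox}, and especially its delicate $\zeta=0$ (critical $\lambda$) case, is required. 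An alternative route would be a Perron--Frobenius-type argument---that a positive eigenfunction of such an operator is automatically the ground state and is simple---but establishing positivity-preservation of the semigroup in the presence of the singular potential and the Dirichlet corner is more delicate than the direct variational computation above, so I would keep the approach just described.
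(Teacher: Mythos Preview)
Your proposal is correct and follows essentially the same two-step strategy as the paper: the same cut-offs $\Lambda_\eps=\alpha_{k,\lambda}\psi_\eps(|x|)$ together with \eqref{lNormMinusL2norm} and \eqref{psi1} to prove $\alpha_{k,\lambda}\in H_\lambda$ and the eigenfunction equation via testing against $\varphi\in C_c^\infty$, and then the same simplicity argument via the vanishing of $\int\alpha_{k,\lambda}^2|\nabla(u_n/\alpha_{k,\lambda})|^2$ and the local positivity of $\alpha_{k,\lambda}$. Your discussion of the subtleties (localisation, connectedness, the critical case $\zeta=0$) is accurate and matches exactly what the paper handles.
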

	
	\section{Asymptotic behaviour for the heat equation with Hardy potential on $\RR_+^{N,k}$}
	\label{section:asymptotic}
	In this section, we will analyse the long-time decay and the asymptotic profile for the solutions of the Cauchy problem \eqref{heatHardyHalf}, by first computing the profile for the problem in self-similarity variables \eqref{hardyPotentialSymFull}.
	
	With the definitions in Section \ref{functionalSpectral},  equation \eqref{hardyPotentialSymFull} can be written as:
	\begin{equation}
		\label{equationVHat}
		\begin{cases}
			\partial_s v(s,y)+L_\lambda v(s,y)=0, &s> 0, y\in \RR^{N,k}_+,\\
			v(0,y)=v_0(y)=K^\frac{1}{2}(y)u_0(y), &y\in \RR^{N,k}_+.
		\end{cases}
	\end{equation}
	By the spectral results in Section \ref{firstEigenvalue}, there exists an orthonormal sequence of eigenfunctions $(e_n)_{n\geq 0}$ of $L_\lambda$, which span a dense subset of $L^2$ and the corresponding sequence of eigenvalues $(\mu_n)_{n\geq 0}$ is non-decreasing, $\mu_0=\frac{1+m_\lambda}{2}$ and $\mu_1>\frac{1+m_\lambda}{2}$, where we recall that $m_\lambda=\sqrt{\lambda_{N,k}-\lambda}$.\\
	Moreover, by Theorem \ref{firstEigenfunction}, the eigenspace corresponding to $\mu_0$ is spanned by:
	$$\alpha_{k,\lambda}(x)=e^{-\frac{|x|^2}{8}}\, |x|^{m_\lambda-\frac{N-2}{2}}\, \frac{x_{N-k+1}x_{N-k+2}\cdots x_{N}}{|x|^k}.$$
	Then, the first normalised eigenfunction is given by:
	$$e_0=\left(\left\|\alpha_{k,\lambda}\right\|_{L^2}\right)^{-1}\alpha_{k,\lambda}.$$
	Since the function $\alpha_{k,\lambda}^2$ is  evenly symmetric in each variable, we have that:
	$$\|\alpha_{k,\lambda}\|_{L^2(\RR_+^{N,k})}^2=\frac{1}{2^k}\int_{\RR^N} e^{-\frac{|x|^2}{4}}\, |x|^{2m_\lambda-N+2} \left(\frac{x_{N-k+1}x_{N-k+2}\cdots x_{N}}{|x|^k}\right)^2 \dx.$$
	Therefore, by passing to polar coordinates, we obtain:
	\begin{equation}
	\label{L2normOfAlphaNK}
	\|\alpha_{k,\lambda}\|_{L^2(\RR_+^{N,k})}^2=\frac{\omega_{N,k}}{2^k}\int_0^\infty e^{-\frac{r^2}{4}}\, r^{2m_\lambda+1} dr,
	\end{equation}
	where $\omega_{N,k}=\int_{S^{N-1}}\sigma_{N-k+1}^2\cdots \sigma_N^2 d\sigma$.
	In particular, for $\lambda=\lambda_{N,k}$ critical,
	$$\|\alpha_{k,\lambda}\|_{L^2(\RR_+^{N,k})}^2=\frac{\omega_{N,k}}{2^{k-1}},$$
	so, in this case,
	$$e_0=\sqrt{\frac{2^{k-1}}{\omega_{N,k}}}\alpha_{k,\lambda}.$$

\subsection{Proof of Theorem \ref{main_th}}
	We begin by computing the solution of \eqref{equationVHat} explicitly using the Hilbert basis $(e_n)_{n\geq 0}$.	First, we decompose the initial data $v_0\in L^2(\RR_+^{N,k})$ with respect to this basis:
	\begin{equation}\label{expansionInitialData}
	v_0(y)=\sum_{n\geq 0} \beta_n(0) \, e_n(y),\quad \text{where}\quad
	\beta_n(0)=\int_{\RR_+^{N,k}} v_0\, e_n\, \dy.
	\end{equation}
	It follows that the solution $v\in C([0,\infty),L^2(\RR_+^{N,k}))$ of \eqref{equationVHat} is given by
	\begin{equation}
	\label{solutionHeatHardySymFullWithSpectral}
	v(s,y)=\sum_{n\geq 0} \beta_n(0) e^{-\mu_n s}\, e_n(y).
	\end{equation}
 By Parseval's identity we have:
 \begin{equation}\label{Parseval}
 \|v(s)\|_{L^2}^2 = \sum_{n\geq 0} \beta_n^2 (0) e^{-2\mu_n s}\leq e^{-2 \mu_0 s}\sum_{n\geq 0}\beta_n^2(0)=e^{-2 \mu_0 s} \|v_0\|_{L^2}^2.
 \end{equation}
	Since $\mu_0=\frac{m_\lambda+1}{2}$, it follows that:
	\begin{equation}
	\label{decay.estimate1}
	e^{\frac{1+m_\lambda}{2}s}\|v(s)\|_{L^2}\leq \|v_0\|_{L^2}.
	\end{equation}
    Similarly, by the fact that $\mu_n\geq \mu_1>\frac{m_\lambda+1}{2}$ for every $n\geq 1$, we obtain:
   $$ \|v(s)-\beta_0(0) e^{-\frac{1+m_\lambda}{2}s} e_0\|^2_{L^2}=\sum_{n\geq 1}\beta_n^2(0) e^{-2\mu_n s}\leq e^{-(2\mu_1-(1+m_\lambda))s}e^{-(1+m_\lambda)s}\|v_0\|^2_{L^2}$$
   As a result,
	\begin{equation}\label{decay.profile1}
	\lim_{s\to\infty} e^{\frac{1+m_\lambda}{2}s}\left\|v(s)-\beta_0(0)\, e^{-\frac{1+m_\lambda}{2}s}\,  e_0\right\|_{L^2}=0.
	\end{equation}
	Taking into account that:
 $$v(s,y)=e^{\frac{|y|^2}{8}}e^{\frac{Ns}{4}}u(e^s-1,e^\frac{s}{2}y) \hspace{0.1cm}\text{ and }\hspace{0.1cm} \alpha_{k, \lambda}(y)= (t+1)^{\frac{N}{4}-\frac{1+m_\lambda}{2}}e^{-\frac{|x|^2}{8(t+1) }} e^{\frac{|x|^2}{8}}\alpha_{k, \lambda}(x),$$
  estimate \eqref{decay.estimate1} implies that:
	$$(t+1)^{1+m_\lambda} \int_{\RR_+^{N, k}} \left|u(t,x)\right|^2 e^{\frac{|x|^2}{4(t+1)}}\dx\leq \int_{\RR_+^{N,k}}\left|u_0(x)\right|^2\, e^{\frac{|x|^2}{4}}\dx$$
	and \eqref{decay.profile1} reads as:
		$$\lim_{t\to \infty}(t+1)^{1+m_\lambda} \int_{\RR_+^{N,k}} \left|u(t,x)-\beta\, (t+1)^{-(1+m_\lambda)}e^{-\frac{|x|^2}{4(t+1)}}\, e^{\frac{|x|^2}{8}}\alpha_{k, \lambda}(x)\right|^2 e^{\frac{|x|^2}{4(t+1)}}\dx=0,$$
		where we denote $\beta:=\left(\left\|\alpha_{k,\lambda}\right\|_{L^2(\RR_{+}^{N, k})}\right)^{-1}\beta_0(0)$.\\
    Since, for any $t$ and $x$, we have that $e^{\frac{|x|^2}{4(t+1)}}\geq 1$, \eqref{decay.estimate1} and \eqref{decay.profile2} lead to:
    	\begin{equation}
    	\label{decay.estimate2}
    	(t+1)^{1+m_\lambda} \int_{\RR_+^{N,k}} \left|u(t,x)\right|^2 \dx\leq \int_{\RR^{N,k}_+}\left|u_0(x)\right|^2 e^{\frac{|x|^2}{4}} \dx
    	\end{equation}
    and 
    \begin{equation}
    \label{decay.profile2}
    \lim_{t\to \infty} t^{1+m_\lambda} \int_{\RR_+^{N,k}} \left|u(t,x)-\beta\, (t+1)^{-(1+m_\lambda)}e^{-\frac{|x|^2}{4(t+1)}}\, e^{\frac{|x|^2}{8}}\alpha_{k, \lambda}(x)\right|^2 \dx= 0.
    \end{equation}
    Taking into account that, due to a change of variables $x=z\sqrt{t}$ followed by dominated convergence,
    $$\lim_{t\to \infty} t^{1+m_\lambda} \int_{\RR_+^{N,k}} \left|\left[(t+1)^{-(1+m_\lambda)}e^{-\frac{|x|^2}{4(t+1)}}-t^{-(1+m_\lambda)}e^{-\frac{|x|^2}{4t}}\right] e^{\frac{|x|^2}{8}}\alpha_{k, \lambda}(x)\right|^2 \dx= 0,$$
    we arrive to the asymptotic results in Theorem \ref{main_th}.
    
    We note that \eqref{expansionInitialData} leads to:
 $$\beta=\left(\left\|\alpha_{k,\lambda}\right\|_{L^2(\RR_{+}^{N, k})}\right)^{-1}\int_{\RR_{+}^{N, k}} v_0(x) \alpha_{k, \lambda}(x) \dx.=\left(\left\|\alpha_{k,\lambda}\right\|_{L^2(\RR_{+}^{N, k})}\right)^{-1}\int_{\RR_{+}^{N, k}} u_0(x)e^{\frac{|x|^2}{8}} \alpha_{k, \lambda}(x) \dx.$$
 
 In order to prove the optimality of the polynomial decay obtained in \eqref{decay.estimate2}, we consider the initial data: 
 $$u_0(x)=e^{-\frac{|x|^2}{8}}\alpha_{k,\lambda}(x),$$
 so that $v_0$ is exactly the eigenfunction $\alpha_{k,\lambda}$ of $L_\lambda$. Therefore, the solution $v$ of \eqref{equationVHat} is given by:
 $$v(s,y)=e^{-\frac{1+m_\lambda}{2}s} v_0(y).$$
 Consequently, reverting the self-similarity change of variables \eqref{selfSimilarityFull}, the corresponding solution of \eqref{heatHardyHalf} is:
 $$
 \begin{aligned}
 u(t,x)&=e^{-\frac{|x|^2}{8(t+1)}}(t+1)^{-\frac{N}{4}}\, v\left(\log(t+1),\frac{x}{\sqrt{t+1}}\right)\\
 &=(t+1)^{-\frac{1+m_\lambda}{2}-\frac{N}{4}}e^{-\frac{|x|^2}{8(t+1)}}v_0\left(\frac{x}{\sqrt{t+1}}\right)\\
 &=(t+1)^{-\frac{1+m_\lambda}{2}-\frac{N}{4}}u_0\left(\frac{x}{\sqrt{t+1}}\right).
 \end{aligned}
 $$
Finally, after another change of variable $x=z\sqrt{t+1}$, we arrive to:
 $$\|u(t)\|_{L^2(\RR^{N,k}_+)}= (t+1)^{-\frac{1+m_\lambda}{2}} \|u_0\|_{L^2(\RR^{N,k}_+)},$$
 so the optimal decay of the solutions of \eqref{heatHardyHalf} is $\gamma_\lambda=\frac{1+m_\lambda}{2}$.
 \hfill\qedsymbol{}
	\begin{remark}
	The solution \eqref{solutionHeatHardySymFullWithSpectral} of \eqref{hardyPotentialSymFull} obtained via the spectral method above is the same with the one obtained in  \eqref{well-posedness} via semigroup theory. 
	\end{remark}
	\begin{proof}
	From the expression in \eqref{solutionHeatHardySymFullWithSpectral}, we deduce that $v\in C([0,\infty),L^2(\RR^{N,k}_+))$. Moreover, the fact that the sequence $(e_k)_k$ is orthonormal in  $L^2$ implies that, for $s_0>0$,
	$$\lim_{s\to s_0} \frac{\|v(s)-v(s_0)+(s-s_0)\sum_{n\geq 0}\beta_n(0)\mu_n\, e^{-\mu_n s_0}e_n\|_{L^2(\RR^{N,k}_+)}}{s-s_0}=0.$$
	Therefore, 
	$$\partial_s v(s)=-\sum_{n\geq 0}\beta_n(0)\mu_n e^{-\mu_n s}e_k,$$ 
	from which we deduce that $v\in C^1((0,\infty),L^2(\RR^{N,k}_+))$.
	
	We are left to prove that $v\in C((0,\infty),D(L_\lambda))$ and, that, for $s>0$, $L_\lambda v(s)=\sum_{n\geq 0} \beta_n(0)\mu_n\, e^{-\mu_n s}e_n$. Indeed, since
	$$l_\lambda(e_i,e_j)=(L_\lambda e_i,e_j)_{L^2}=\mu_i\delta_{i,j},\forall i,j\geq 0,$$
	we deduce that
	$$\left\|\sum_{i=n_1}^{n_2} \beta_i(0)e^{-\mu_i s} e_i\right\|^2_{l_\lambda}=\sum_{i=n_1}^{n_2} \beta_i(0)^2\mu_i e^{-2\mu_i s}.$$
	Therefore, as $\sum_{n\geq 0} \beta_n(0)^2<\infty$, the sequence $(s_n)_n=\left(\sum_{i=0}^{n} \beta_i(0)e^{-\mu_i s} e_i\right)_n$ is Cauchy in $\|\cdot\|_{l_\lambda}$. 
 
	Since the space $H_\lambda$ is continuously embedded in $L^2(\RR^{N,k}_+)$, it follows that the limit of $(s_n)_n$ in $H_\lambda$ coincides with the $L^2$ limit of $(s_n)_n$, which is $v(s)$. Therefore, $v(s)\in H_\lambda$, for every $s>0$.
	
	Moreover, the following limit takes place in $H_\lambda^*$:
	\begin{equation}
	\label{equivalenceWellPosedness.limit1}
	    \mathcal{L}_\lambda v(s)=\lim_{n\to \infty} \mathcal{L}_\lambda s_n
	\end{equation}
    On the other hand, since every $e_i$ is in $D(L_\lambda)$, then
    $$\mathcal{L}_\lambda s_{n}=L_\lambda s_n=\sum_{i=0}^n \beta_i(0)\mu_i e^{-\mu_i s} e_i.$$
    Form the expression above, we deduce that $(L_\lambda s_n)_n$ is a Cauchy sequence in $L^2$. Therefore, \eqref{equivalenceWellPosedness.limit1}, together with the continuous embedding $L^2\subset H_\lambda^*$ implies that:
    $$\mathcal{L}_\lambda v(s)=\sum_{n\geq 0} \beta_n(0)\mu_n e^{-\mu_n s} e_n\in L^2,$$
	so we conclude by \eqref{def.DLlambda} that,for every $s>0$, $v(s)\in D(L_\lambda)$ and 
	 $$L_\lambda v(s)=\sum_{n\geq 0} \beta_n(0)\mu_n e^{-\mu_n s} e_n.$$
	\end{proof}

    \providecommand{\bysame}{\leavevmode\hbox to3em{\hrulefill}\thinspace}
\providecommand{\MR}{\relax\ifhmode\unskip\space\fi MR }

\providecommand{\MRhref}[2]{%
  \href{http://www.ams.org/mathscinet-getitem?mr=#1}{#2}
}
\providecommand{\href}[2]{#2}


\begin{thebibliography}{10}

\bibitem{AbdellaouiPeralPrimo}
Boumediene Abdellaoui, Ireneo Peral, and Ana Primo, \emph{Strong regularizing
  effect of a gradient term in the heat equation with the {H}ardy potential},
  J. Funct. Anal. \textbf{258} (2010), no.~4, 1247--1272. \MR{2565839}

\bibitem{BarasGoldstein}
Pierre Baras and Jerome~A. Goldstein, \emph{The heat equation with a singular
  potential}, Trans. Amer. Math. Soc. \textbf{284} (1984), no.~1, 121--139.
  \MR{742415}

\bibitem{brezis}
Haim Brezis, \emph{Functional analysis, Sobolev spaces and partial differential
  equations}, Universitext, Springer New York, 2010.
  
\bibitem{BM1997}
Ha\"{\i}m Brezis and Moshe Marcus, \emph{Hardy's inequalities revisited},
  vol.~25, 1997, Dedicated to Ennio De Giorgi, pp.~217--237.
  
\bibitem{BV1997}
Ha\"{\i}m Brezis and Juan~Luis V\'{a}zquez, \emph{Blow-up solutions of some
  nonlinear elliptic problems}, Rev. Mat. Univ. Complut. Madrid \textbf{10}
  (1997), no.~2, 443--469.

\bibitem{CazacuFlynnLam}
Cristian Cazacu, Joshua Flynn, and Nguyen Lam, \emph{Short proofs of refined
  sharp Caffarelli-Kohn-Nirenberg inequalities}, Journal of Differential
  Equations \textbf{302} (2021), 533--549.

\bibitem{CazacuDavid}
Cristian Cazacu and David Krejčiřík, \emph{The Hardy inequality and the heat
  equation with magnetic field in any dimension}, Communications in Partial
  Differential Equations \textbf{41} (2016), no.~7, 1056--1088.

\bibitem{cazenave}
Thierry Cazenave and Alain Haraux, \emph{An introduction to semilinear
  evolution equations}, Oxford University Press, 1998.

\bibitem{EscobedoKavian}
M.~Escobedo and O.~Kavian, \emph{Variational problems related to self-similar
  solutions of the heat equation}, Nonlinear Analysis: Theory, Methods \&
  Applications \textbf{11} (1987), no.~10, 1103--1133.

\bibitem{EscobedoZuazua1991}
Miguel Escobedo and Enrike Zuazua, \emph{Large time behavior for
  convection-diffusion equations in $\RR^N$}, Journal of Functional Analysis
  \textbf{100} (1991), no.~1, 119--161.

\bibitem{FerreiraMesquita}
Lucas C.~F. Ferreira and Cl\'{a}udia Aline A.~S. Mesquita, \emph{An approach
  without using {H}ardy inequality for the linear heat equation with singular
  potential}, Commun. Contemp. Math. \textbf{17} (2015), no.~5, 1550041, 16.
  \MR{3404752}

\bibitem{GarciaAzoreroPeralAlonso}
J.~P. Garc\'{\i}a~Azorero and I.~Peral~Alonso, \emph{Hardy inequalities and
  some critical elliptic and parabolic problems}, J. Differential Equations
  \textbf{144} (1998), no.~2, 441--476. \MR{1616905}
  
\bibitem{GM2011}
Nassif Ghoussoub and Amir Moradifam, \emph{Bessel pairs and optimal {H}ardy and
  {H}ardy-{R}ellich inequalities}, Math. Ann. \textbf{349} (2011), no.~1,
  1--57.

\bibitem{GidasSpruck}
B.~Gidas and J.~Spruck, \emph{Global and local behavior of positive solutions
  of nonlinear elliptic equations}, Communications on Pure and Applied
  Mathematics \textbf{34} (1981), no.~4, 525--598.

\bibitem{GigaKohn}
Yoshikazu Giga and Robert~V. Kohn, \emph{Asymptotically self-similar blow-up of
  semilinear heat equations}, Communications on Pure and Applied Mathematics
  \textbf{38} (1985), no.~3, 297--319.

\bibitem{Gkikas}
Konstantinos~T. Gkikas, \emph{Hardy-{S}obolev inequalities in unbounded domains
  and heat kernel estimates}, J. Funct. Anal. \textbf{264} (2013), no.~3,
  837--893. \MR{3003739}


\bibitem{IshigeMukai}
Kazuhiro Ishige and Asato Mukai, \emph{Large time behavior of solutions of the
  heat equation with inverse square potential}, Discrete Contin. Dyn. Syst.
  \textbf{38} (2018), no.~8, 4041--4069. \MR{3814364}

\bibitem{Krejcirik}
David Krej\v{c}i\v{r}\'{\i}k, \emph{The improved decay rate for the heat
  semigroup with local magnetic field in the plane}, Calc. Var. Partial
  Differential Equations \textbf{47} (2013), no.~1-2, 207--226. \MR{3044137}
  
\bibitem{DavidCurvedWedges}
David Krejčiřík, \emph{The {H}ardy inequality and the heat flow in curved
  wedges}, Port. Math. \textbf{73} (2016), no.~2, 91--113. \MR{3500825}

\bibitem{DavidZuazuaTwistedDomains}
David Krejčiřík and Enrique Zuazua, \emph{The Hardy inequality and the heat
  equation in twisted tubes}, Journal de Mathématiques Pures et Appliquées
  \textbf{94} (2010), no.~3, 277--303.

\bibitem{LLZ2020}
Nguyen Lam, Guozhen Lu, and Lu~Zhang, \emph{Geometric {H}ardy's inequalities
  with general distance functions}, J. Funct. Anal. \textbf{279} (2020), no.~8,
  108673, 35.

\bibitem{QianShen}
Chenyin Qian and Zifei Shen, \emph{Existence of global solutions and attractors
  for the parabolic equation with critical {S}obolev and {H}ardy exponent in
  {$\Bbb R^N$}}, Nonlinear Anal. Real World Appl. \textbf{42} (2018), 290--307.
  \MR{3773361}

\bibitem{ReedSimon}
Michael Reed and Barry Simon, \emph{Methods of modern mathematical physics},
  Academic Press, 1972.

\bibitem{Su-Yang}
Dan Su and Qiao-Hua Yang, \emph{On the best constants of {H}ardy inequality in
  {$\Bbb R^{n-k}\times(\Bbb R_+)^k$} and related improvements}, J. Math. Anal.
  Appl. \textbf{389} (2012), no.~1, 48--53. \MR{2876479}

\bibitem{VazquezZuazua}
Juan~Luis Vazquez and Enrike Zuazua, \emph{The Hardy inequality and the
  asymptotic behaviour of the heat equation with an inverse-square potential},
  Journal of Functional Analysis \textbf{173} (2000), no.~1, 103--153.

\end{thebibliography}
\end{document}